\newcommand{\GL}{\mathrm{GL}} 
\newcommand{\conv}{\mathrm{conv}\,} 
\newcommand{\vol}{\mathrm{vol}\,} 
\newcommand{\mvol}{\mathrm{vol}\,} 
\newcommand{\Lat}{\mathcal{L}} 
\newcommand{\Ksn}{{\mathcal K}_{(s)}^n} 
\newcommand{\R}{\mathbb{R}} 
\newcommand{\N}{\mathbb{N}} 
\newcommand{\Z}{\mathbb{Z}} 
\newcommand{\disuni}{\mathbin{\setbox0\hbox{$\bigcup$}\rlap{\copy0}\raise.3%
  \ht0\hbox to \wd0{\hfil$\cdot$\hfil}}}
\newcommand{\ip}[2]{\left\langle #1,#2\right\rangle}
\newcommand{\dual}[1]{{#1}^\star}
\newcommand{\ov}{\overline}
\newcommand{\ban}{B_n}
\newcommand{\trans}{\intercal}
\newcommand{\va}{{\boldsymbol a}}
\newcommand{\vb}{{\boldsymbol b}}
\newcommand{\vc}{{\boldsymbol c}}
\newcommand{\ve}{{\boldsymbol e}}
\newcommand{\vu}{{\boldsymbol u}}
\newcommand{\vx}{{\boldsymbol x}}
\newcommand{\vy}{{\boldsymbol y}}
\newcommand{\vz}{{\boldsymbol z}}
\newcommand{\vone}{{\boldsymbol 1}}
\newcommand{\vnull}{{\boldsymbol 0}}
\newcommand{\gap}{\rm{GAP}}
\newcommand{\gapP}{{\mathrm{P}}}
\newcommand{\symgap}[2]{{\gapP\left(#1, #2\right)}}
\newcommand{\landauO}[1]{{\operatorname{O}\left(#1\right)}}
\definecolor{196C}{HTML}{ECC7CD}	
\definecolor{197C}{HTML}{E89CAE}	
\definecolor{198C}{HTML}{DF4661}	
\definecolor{199C}{HTML}{D50032}	
\definecolor{200C}{HTML}{BA0C2F}	
\definecolor{201C}{HTML}{9D2235}	
\definecolor{202C}{HTML}{862633}	
\definecolor{427C}{RGB}{208,211,212}
\definecolor{428C}{RGB}{193,198,200}
\definecolor{429C}{RGB}{162,170,173}
\definecolor{430C}{RGB}{124,135,142}
\definecolor{431C}{RGB}{91,103,112}
\definecolor{432C}{RGB}{51,63,72}
\definecolor{433C}{RGB}{29,37,45}
\definecolor{7681C}{HTML}{94A9CB}	
\definecolor{7682C}{HTML}{6787B7}	
\definecolor{7683C}{HTML}{426DA9}	
\definecolor{7684C}{HTML}{385E9D}	
\definecolor{7685C}{HTML}{2C5697}	
\definecolor{7686C}{HTML}{1D4F91}	
\definecolor{7687C}{HTML}{1D428A}	
\definecolor{7695C}{HTML}{7BA7BC}	
\definecolor{7696C}{HTML}{6399AE}	
\definecolor{7697C}{HTML}{4E87A0}	
\definecolor{7698C}{HTML}{41748D}	
\definecolor{7699C}{HTML}{34657F}	
\definecolor{7700C}{HTML}{165C7D}	
\definecolor{7701C}{HTML}{005776}	
\definecolor{3248C}{HTML}{6DCDB8} 
\definecolor{3258C}{HTML}{49C5B1} 
\definecolor{3268C}{HTML}{00AB8E} 
\definecolor{3278C}{HTML}{009B77} 
\definecolor{3288C}{HTML}{008264} 
\definecolor{3298C}{HTML}{006A52} 
\definecolor{3308C}{HTML}{034638} 
\definecolor{310C}{HTML}{6AD1E3}	
\definecolor{311C}{HTML}{05C3DE}	
\definecolor{312C}{HTML}{00A9CE}	
\definecolor{313C}{HTML}{0092BC}	
\definecolor{314C}{HTML}{007FA3}	
\definecolor{315C}{HTML}{00677F}	
\definecolor{316C}{HTML}{004851}	
\definecolor{705C}{HTML}{F5DADF}	
\definecolor{706C}{HTML}{F7CED7}	
\definecolor{707C}{HTML}{F9B5C4}	
\definecolor{708C}{HTML}{F890A5}	
\definecolor{709C}{HTML}{EF6079}	
\definecolor{710C}{HTML}{E03E52}	
\definecolor{711C}{HTML}{CB2C30}	
\definecolor{162C}{HTML}{FFBE9F}	
\definecolor{163C}{HTML}{FF9D6E}	
\definecolor{164C}{HTML}{FF7F41}	
\definecolor{165C}{HTML}{FF671F}	
\definecolor{166C}{HTML}{E35205}    
\definecolor{167C}{HTML}{BE531C}	
\definecolor{168C}{HTML}{73381D}	
\colorlet{mygray}{430C}
\colorlet{thesiscolor1}{427C}
\colorlet{thesiscolor2}{428C}
\colorlet{thesiscolor3}{429C}
\colorlet{thesiscolor4}{430C}
\colorlet{thesiscolor5}{431C}
\colorlet{thesiscolor6}{432C}
\colorlet{thesiscolor7}{433C}
\colorlet{thesis_2dshade}{429C}
\colorlet{thesis_2dcontrast}{199C}
\colorlet{thesis_2doutline}{432C}
\colorlet{thesis_3dshade1}{thesiscolor1}
\colorlet{thesis_3dshade2}{thesiscolor4}
\colorlet{thesis_3dshade3}{thesiscolor6}
\theoremstyle{change}
\newtheorem{theorem}{Theorem}[section]
\newtheorem*{theorem*}{Theorem}
\newtheorem{lemma}[theorem]{Lemma}
\newtheorem{proposition}[theorem]{Proposition}
\newtheorem{remark}[theorem]{Remark}
\numberwithin{equation}{section}
\numberwithin{figure}{section}
\begin{document}

\title{On a discrete John-type theorem} 
\author{S\"oren Lennart Berg}
\author{Martin Henk}
\address{Institut f\"ur Mathematik, Sekr. MA 4-1, Technische
  Universit\"at Berlin, Stra{\ss}e des 17. Juni 136, D-10623 Berlin,
  Germany}
\email{berg@math.tu-berlin.de, henk@math.tu-berlin.de}
\thanks{This paper contains some material of the PhD thesis of the
  first named author. }

\begin{abstract} As a discrete counterpart to the classical John theorem on the
  approximation of (symmetric) $n$-dimensional convex bodies $K$ by ellipsoids, Tao and Vu
  introduced so called generalized arithmetic progressions 
  $\gapP(A,\vb)\subset\Z^n$ 
   in order to
  cover (many of) the  lattice points inside a convex body by a simple
  geometric structure. Among others, they proved that there exists a  generalized arithmetic progressions 
  $\gapP(A,\vb)$ such that $\gapP(A,\vb)\subset K\cap\Z^n\subset
  \gapP(A,O(n)^{3n/2}\vb)$. Here we show that this bound can be
  lowered to $n^{O(\ln n)}$
  and study some genereal properties of so called unimodular  generalized arithmetic progressions.
\end{abstract} 

\maketitle

\section{Introduction} 
Let  $\Ksn$ be the set of all $o$-symmetric convex bodies in $\R^n$,
i.e., $K\in\Ksn$ is a compact convex set in $\R^n$ with non-empty
interior and $K=-K$. By $\ban\in\Ksn$ we denote the $n$-dimensional unit
 ball, i.e., $\ban=\{\vx\in\R^n : \ip{\vx}{\vx}\leq 1\}$, where
 $\ip{\cdot}{\cdot}$ is the standard inner product.

For the family $\Ksn$ of $o$-symmetric convex bodies in $\R^n$, 
John's (ellipsoid) theorem states that there exists an ellipsoid
$\mathcal{E}\in\Ksn$ such that (see, e.g., \cite[Theorem
2.1.3]{ArtsteinAvidan:2015hq}, \cite[Theorem 10.12.2]{Schneider:2014td})
\begin{equation}
    \mathcal{E} \subseteq K\subseteq \sqrt{n}\, \mathcal{E}.  
\label{eq:inclusion_john}
\end{equation}
It turns out that the volume maximal ellipsoid contained in $K$ gives
the desired approximation, and in the non-symmetric (or general) case
the factor $\sqrt{n}$  has to be replaced by $n$ (after a suitable
translation of $K$).    

This theorem has numerous applications in Convex Geometry or in the
local theory of Banach spaces (see, e.g.,
\cite{ArtsteinAvidan:2015hq}, \cite{Schneider:2014td}), as it allows
to get a first quick 
estimate on the value $f(K)$ of any homogenous and monotone
functional $f$  on $\Ksn$ by the value of the functional at ellipsoids. 
For instance, if  $\mvol()$ denotes the $n$-dimensional volume, i.e.,
$n$-dimensional Lebesgue measure, than \eqref{eq:inclusion_john}
implies that for $K\in\Ksn$ there exists an ellipsoid $\mathcal{E}$
such that 
\begin{equation}
    \mvol(\mathcal{E}) \leq\mvol(K)\leq  n^{\frac{n}{2}}\mvol(\mathcal{E}).  
\label{eq:volume_john}
\end{equation}
In particular, the volume of an ellipsoid can easily be evaluated as
  $\mathcal{E} = A\,\ban$ for some $A\in\GL(n,\R)$, and thus
  $\vol(\mathcal{E})=|\det A|\,\vol(B_n)$.  

In \cite{Tao:2006vs}, Tao and Vu started to study a discrete version of John's
theorem where the aim of the approximation is the set of lattice
points in $K$, i.e., the set $K\cap\Z^n$.  The approximation itself is
carried out not by lattice points in ellipsoids, which are hard to
control or to compute, but by a so called symmetric generalized
arithmetic progression 
({\gap} for short)
\begin{equation*} 
 \symgap{A}{\vb}=\{A\, \vz : \vz\in\Z^n,\, |z_i|\leq b_i, 1\leq i\leq n\},  
\end{equation*} 
where $A\in \Z^{n\times n}$, $\det A\ne 0$,  and $\vb\in \R^n$. Hence,
$\symgap{A}{\vb}$ are  the lattice points of the lattice $A\Z^n$ in the 
parallelepiped $\sum_{i=1}^n\conv\{-b_i\va_i,b_i\va_i\}$, where  $\va_i$ is
the $i$th column  of $A$ and $\conv$ denotes the convex hull.  

By improving on an earlier  result from \cite[Lemma 3.36]{Tao:2006vs}, 
 Tao and Vu proved in \cite{Tao:2008ul}

\begin{theorem*}[\protect{\cite[Theorem 1.6]{Tao:2008ul}}] Let
  $K\in\Ksn$. Then  there  exists a {\gap} $\symgap{A}{\vb}\subset K$  
    such that  
    \begin{equation}\label{eq:taovueq1}
      \begin{split} 
       {\rm i)}&\,\, 
        K\cap\Z^n 
        \subset \symgap{A}{\landauO{n}^{3n/2}\vb}, \\
        {\rm ii)}&\,\,
        |K\cap\Z^n|
       < \landauO{n}^{7n/2}|\symgap{A}{\vb}|.
        \end{split} 
    \end{equation}
\end{theorem*}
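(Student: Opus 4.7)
My approach is to combine John's ellipsoid theorem with the theory of successive minima and a quantitative lattice basis reduction. First, I would apply John's theorem to obtain an ellipsoid $\mathcal{E}\in\Ksn$ with $\mathcal{E}\subseteq K\subseteq\sqrt{n}\,\mathcal{E}$, which reduces the problem to approximating $\mathcal{E}\cap\Z^n$ by a {\gap} at the cost of a factor $\sqrt{n}$ in the outer inclusion in \eqref{eq:taovueq1}~i). Writing $\mathcal{E}=T\,\ban$ for some $T\in\GL(n,\R)$, I would then consider the successive minima $\lambda_1\le\cdots\le\lambda_n$ of $\mathcal{E}$ with respect to $\Z^n$.

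Second, I would choose a lattice basis adapted to $\mathcal{E}$. By a Minkowski- or Korkine--Zolotarev-type reduction theorem there exists a basis $\vv_1,\ldots,\vv_n$ of $\Z^n$ with $\norm{\vv_i}{\mathcal{E}}\le c_i\,\lambda_i$ whose Gram--Schmidt orthogonalisation $\vv_i^*$ in the inner product defining $\mathcal{E}$ satisfies $\norm{\vv_i^*}{\mathcal{E}}\ge \lambda_i/c_i'$, with both reduction constants $c_i,c_i'$ growing only mildly in $n$. I set $A=[\vv_1\,|\,\cdots\,|\,\vv_n]\in\Z^{n\times n}$ and $b_i$ of order $1/(n c_i\lambda_i)$. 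The triangle inequality for $\norm{\cdot}{\mathcal{E}}$ then yields $\sum_i z_i\vv_i\in\mathcal{E}$ whenever $|z_i|\le b_i$, so that $\symgap{A}{\vb}\subseteq\mathcal{E}\subseteq K$.

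For the outer inclusion in part~i) I would take any $\vx\in K\cap\Z^n\subseteq\sqrt{n}\,\mathcal{E}$ and write it uniquely as $\vx=\sum_{i=1}^n z_i\vv_i$ with $z_i\in\Z$, which is possible because $\{\vv_i\}$ is a basis of $\Z^n$. Exploiting Gram--Schmidt orthogonality in the $\mathcal{E}$-inner product gives $n\ge\norm{\vx}{\mathcal{E}}^2\ge\sum_i z_i^2\,\norm{\vv_i^*}{\mathcal{E}}^2$, hence $|z_i|\le\sqrt{n}\,c_i'/\lambda_i\le n^{3/2}c_ic_i'\,b_i$. A careful bookkeeping of the reduction constants then yields $|z_i|\le \landauO{n}^{3n/2}\,b_i$ as claimed. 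For part~ii) I would combine Minkowski's second theorem, $\prod_i\lambda_i\cdot\vol(\mathcal{E})\le 2^n$, with a Blichfeldt/van der Corput-type bound $|K\cap\Z^n|\le n^{\landauO{n}}\prod_i\max(1,1/\lambda_i)$, and compare it with $|\symgap{A}{\vb}|=\prod_i(2\lfloor b_i\rfloor+1)$, which is of order $\prod_i 1/\lambda_i$ up to factors of type $n^{\landauO{n}}$.

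The principal obstacle is obtaining sharp enough reduction constants so that the product $n^{3/2}c_ic_i'$ really lies in $\landauO{n}^{3n/2}$: simple LLL-reduction already suffices for a bound of type $n^{\landauO{n}}$, but matching the specific exponent $3n/2$ forces one to use the quantitatively sharper Minkowski- or Korkine--Zolotarev-reduction estimates on both $\norm{\vv_i}{\mathcal{E}}$ and $\norm{\vv_i^*}{\mathcal{E}}$, and to balance them carefully against the $\sqrt{n}$-factor coming from John's theorem and the factor $n$ used to squeeze $\symgap{A}{\vb}$ inside $\mathcal{E}$.
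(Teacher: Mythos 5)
The theorem you are asked about is quoted from Tao--Vu and is not proved in this paper; the closest internal comparison is with the proofs of Theorems \ref{thm:discretejohnimproved} and \ref{thm:onegap}, which follow exactly the architecture you propose (John position, a reduced basis $\vb_1,\dots,\vb_n$ of the lattice, weights $b_i$ of order $1/(n\Vert\vb_i\Vert)$, triangle inequality for the inner inclusion, and the discrete Minkowski-type inequality \eqref{eq:discreteminkowski} for the cardinality count). So your overall plan is sound. However, your mechanism for the outer inclusion contains a genuine error: the inequality $\Vert\vx\Vert^2\geq\sum_i z_i^2\Vert\vv_i^*\Vert^2$ is false, because the coordinates $z_i$ of $\vx=\sum_i z_i\vv_i$ with respect to the basis are not its coordinates with respect to the Gram--Schmidt system $\vv_1^*,\dots,\vv_n^*$ (only the last one agrees). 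Concretely, take $\vv_1=(1,0)^\trans$, $\vv_2=(M,1)^\trans$ and $\vx=(0,1)^\trans=\vv_2-M\vv_1$: then $\Vert\vv_1^*\Vert=\Vert\vv_2^*\Vert=1$ and your inequality would give $1=\Vert\vx\Vert^2\geq M^2+1$. Hence the claimed bound $|z_i|\leq\sqrt{n}\,c_i'/\lambda_i$ does not follow from lower bounds on the Gram--Schmidt norms.

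The correct device is the reciprocal basis $\dual{\vv_1},\dots,\dual{\vv_n}$ with $\ip{\dual{\vv_i}}{\vv_j}=\delta_{ij}$: then $z_i=\ip{\vx}{\dual{\vv_i}}$, so $|z_i|\leq\Vert\vx\Vert\,\Vert\dual{\vv_i}\Vert\leq\sqrt{n}\,\Vert\dual{\vv_i}\Vert$ in the John position -- this is precisely the Cramer's-rule estimate \eqref{eq:bound1} in the paper. To turn this into $|z_i|\leq C\,b_i$ with $b_i\asymp (n\Vert\vv_i\Vert)^{-1}$ one needs the \emph{products} $\Vert\vv_i\Vert\cdot\Vert\dual{\vv_i}\Vert$ to be small, i.e.\ a basis reduced simultaneously with its reciprocal basis; note that $\Vert\dual{\vv_i}\Vert\geq 1/\Vert\vv_i^*\Vert$ but can be much larger, so controlling $\Vert\vv_i^*\Vert$ from below is not enough. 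This simultaneous reduction is exactly the ingredient the paper imports (Seysen's theorem for \eqref{eq:inclusion}, the H{\aa}stad--Lagarias bound \eqref{eq:hkzpolar} for HKZ bases in Theorem \ref{thm:onegap}), and with it your architecture does yield bounds of the Tao--Vu type and better. Your cardinality sketch is essentially the paper's argument, although bookkeeping at the level of ``$n^{\landauO{n}}$ factors'' would not by itself pin down the specific exponent $7n/2$ in ii).
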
 
Here, for a finite set $C$  we denote by $|C|$ its
cardinality, and  observe that $|\gapP(a,\vb)|=\prod_{i=1}^n(2 \lfloor
b_i\rfloor+1)$ can be easily computed.  Obviously, i) and ii) of the
theorem above may be regarded as discrete
counterparts to \eqref{eq:inclusion_john} and \eqref{eq:volume_john}.
 
A first qualitative version of such a theorem, i.e., without mentioning
 explicit constants, is contained in the paper \cite[Theorem
3]{Barany:1992p12355}. 
Here we show
\begin{theorem}
\label{thm:discretejohnimproved}    Let $K\in\Ksn$. 
\begin{enumerate} 
\item
There
    exists a {\gap} $\symgap{A}{\vb}\subset K$ such that
\begin{equation}
     K\cap\Z^n \subset \symgap{A}{n^{\landauO{\ln n}} \
    \vb}. 
\label{eq:inclusion}
\end{equation}
\item There exists a {\gap} $\symgap{A}{\vb}\subset K$ such that
\begin{equation}\label{eq:cardinality}
         |K\cap\Z^n|
       < \landauO{n}^{n}| \symgap{A}{\vb}|.
    \end{equation}
\end{enumerate}
\end{theorem}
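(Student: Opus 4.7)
The overall plan is to reduce the problem, via John's theorem, to the case where $K=\mathcal{E}$ is an origin-symmetric ellipsoid; this costs only the factor $\sqrt{n}$ from~\eqref{eq:inclusion_john}, which is absorbed into $n^{\landauO{\ln n}}$ in~(i) and into $\landauO{n}^n$ in~(ii). Let $\lambda_1\leq\cdots\leq\lambda_n$ denote the successive minima of $\mathcal{E}$ with respect to $\Z^n$ and pick a Mahler basis $\va_1,\dots,\va_n$ of $\Z^n$, so that $\va_i\in\lambda_i\mathcal{E}$ and $\{\va_1,\dots,\va_n\}$ generates $\Z^n$ as a $\Z$-module. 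Taking $A=(\va_1\,|\,\cdots\,|\,\va_n)$ and $b_i = c/(n\lambda_i)$ with $c>0$ small, the triangle inequality in the Minkowski norm of $\mathcal{E}$ together with symmetry and convexity yields $\symgap{A}{\vb}\subset\mathcal{E}\subset K$, producing the inner {\gap} needed in both assertions.

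The heart of~(i) is to bound the coefficients of every $\vx\in\sqrt{n}\,\mathcal{E}\cap\Z^n$ in this Mahler basis: writing uniquely $\vx=\sum_{i=1}^n z_i\va_i$, one must show $|z_i|\leq n^{\landauO{\ln n}}/\lambda_i$. A direct argument via the Gram matrix or dual basis yields only $n^{\landauO{n}}/\lambda_i$, which is essentially what the Tao--Vu argument produces. To improve this to $n^{\landauO{\ln n}}$ I would implement a \emph{halving recursion}: by a pigeonhole argument on the gaps $\log(\lambda_{i+1}/\lambda_i)$, pick an index $k$ near $n/2$ at which $\lambda_{k+1}/\lambda_k$ is at least polynomially large, split the basis into $\va_1,\dots,\va_k$ and $\va_{k+1},\dots,\va_n$, and treat the two halves inductively---the first inside the sublattice $\Z\va_1+\cdots+\Z\va_k$ with the section $\mathcal{E}\cap\lin\{\va_1,\dots,\va_k\}$, the second inside the quotient lattice in $\lin\{\va_1,\dots,\va_k\}^\perp$ with the projected ellipsoid. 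If each halving loses only a factor $n^{\landauO{1}}$, then after $\lceil\log_2 n\rceil$ levels the cumulative factor is $n^{\landauO{\ln n}}$.

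For~(ii) the recursion is unnecessary. Combining Minkowski's second theorem, which gives $\prod_{i=1}^n\lambda_i\cdot\vol(\mathcal{E})\leq 2^n$, with the standard successive-minima lattice-point bound $|\mathcal{E}\cap\Z^n|\leq\prod_{i=1}^n(2i/\lambda_i+1)$, and contrasting it with the trivial lower bound $|\symgap{A}{\vb}|\geq\prod_{i=1}^n\bigl(2\lfloor c/(n\lambda_i)\rfloor+1\bigr)$ shows---after restricting to indices with $\lambda_i\leq c/n$ (the others contribute trivially on both sides)---that the ratio of cardinalities is at most $\landauO{n}^n$, as claimed.

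The main obstacle is executing the halving step of~(i) while losing only $n^{\landauO{1}}$ per level. After projecting onto $\lin\{\va_1,\dots,\va_k\}^\perp$, the successive minima of the projected lattice with respect to the projected ellipsoid need not coincide with $\lambda_{k+1},\dots,\lambda_n$; this distortion has to be controlled by exploiting the chosen gap between $\lambda_k$ and $\lambda_{k+1}$. Moreover, lifting a quotient lattice point back to $\Z^n$ introduces an error in the $\va_1,\dots,\va_k$ directions that must be reabsorbed into the coefficient bound without blowing up the recursion, which is where the polynomial-in-$n$ gap at the chosen splitting index becomes essential.
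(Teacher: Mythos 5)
The crux of the paper's proof of part (i) --- and the only place the exponent $\landauO{\ln n}$ enters --- is Seysen's theorem (Theorem \ref{thm:seysen}): every lattice has a basis $\vb_1,\dots,\vb_n$ with $\Vert\vb_i\Vert\,\Vert\dual{\vb_i}\Vert\leq n^{\landauO{\ln n}}$ for all $i$ simultaneously. After the John normalization $\ban\subseteq\tilde K\subseteq\sqrt{n}\ban$ the paper takes such a basis of $\Lambda=T\Z^n$, sets $u_i=(n\Vert\vb_i\Vert)^{-1}$, and a short Cramer's-rule computation shows that any $\vx=\sum_i\beta_i\vb_i\in\tilde K$ satisfies $|\beta_i|\leq\sqrt{n}\,\Vert\dual{\vb_i}\Vert\leq n^{3/2}n^{\landauO{\ln n}}u_i$. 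Your halving recursion is in effect an attempt to reprove Seysen's theorem from scratch, and as sketched it does not go through. The pigeonhole step fails at the outset: there need not exist any index $k$ (let alone one near $n/2$) with $\lambda_{k+1}/\lambda_k$ polynomially large --- for $K=\ban$, or any body whose successive minima are all comparable, the recursion has nowhere to split, and you are left with precisely the direct dual-basis estimate that you yourself concede only yields $n^{\landauO{n}}$; this ``no spectral gap'' case is the hard case, not a degenerate one. Even where a gap exists, the projection/lifting distortion control that you flag in your final paragraph is the entire difficulty and is not executed. So part (i) is not proved. (A further small point: a basis of $\Z^n$ with $\va_i\in\lambda_i\mathcal{E}$ need not exist; the minima are attained by linearly independent vectors, and a basis can in general only be guaranteed inside $\max\{1,i/2\}\lambda_i\mathcal{E}$. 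For (i) you genuinely need a basis, so this must be repaired as well.)

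Your part (ii) is essentially the paper's argument: linearly independent vectors realizing the successive minima give the inner {\gap} with $b_i$ of order $(n\lambda_i)^{-1}$, and a discrete successive-minima bound on the number of lattice points controls the ratio. Two differences cost you the stated constant. The paper works directly with $K$ (no reduction to an ellipsoid) and uses the bound $|K\cap\Z^n|\leq 2^{n-1}\prod_{i=1}^n\lfloor 2/\lambda_i+1\rfloor$ from \cite{Henk:2002vta}; your bound $\prod_{i=1}^n(2i/\lambda_i+1)$ carries an extra factor of order $n!$, and the detour through John's ellipsoid costs a further $n^{n/2}$ because the minima of $\sqrt{n}\,\mathcal{E}$ are $\lambda_i/\sqrt{n}$. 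As set up, your comparison yields $\landauO{n}^{2n}$ rather than the claimed $\landauO{n}^{n}$; the scheme is correct, but to recover the theorem's bound you should drop the ellipsoid reduction and use the sharper discrete Minkowski inequality.
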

In comparison to the volume case (John's ellipsoid) 
 a  {\gap}
contained in $K\in\Ksn$  which is optimal for the cardinality bound
\eqref{eq:cardinality}, i.e., covering most of the lattice point in
$K$,  
does not need to be optimal for the inclusion bound \eqref{eq:inclusion}  as well.
We will give an example of this occurence in Proposition \ref{prop:cardinlu}. In
fact, also the two {\gap}s leading to the bounds in \eqref{eq:inclusion} and
\eqref{eq:cardinality} are different (in general). 

Regarding a {\gap} $\gapP(A,\vb)$ which is simultaneously good with respect to
inclusion and cardinality we have the following slight improvement on
the above theorem of Tao and Vu. 

\begin{theorem} Let $K\in\Ksn$. Then  there  exists a
    {\gap} $\gapP(A,\vb)\subset K$  
    such that  
    \begin{equation}
      \begin{split} 
       {\rm i)}&\,\, 
        K\cap\Z^n 
        \subset \symgap{A}{\landauO{n}^{2 n/\ln n}\vb}, \\
        {\rm ii)}&\,\,
        |K\cap\Z^n|
       < \landauO{n}^{2n}|\symgap{A}{\vb}|.
        \end{split} 
      \end{equation}
\label{thm:onegap}      
\end{theorem}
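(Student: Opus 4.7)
My plan is to produce a single GAP that is simultaneously good with respect to both inclusion and cardinality by interpolating between the two separate constructions used in the proofs of the two parts of Theorem~\ref{thm:discretejohnimproved}. The bounds required in Theorem~\ref{thm:onegap} are both weaker than the corresponding individual bounds of Theorem~\ref{thm:discretejohnimproved}(i) and (ii), which makes it plausible that a ``halfway'' construction should suffice.

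The first step is to start from a basis $\va_1,\ldots,\va_n$ of $\Z^n$ adapted to $K$, say one realizing (or closely approximating) the successive minima $\lambda_1\le\cdots\le\lambda_n$ of $K$, and to set $b_i$ proportional to $1/\lambda_i$ so that $\gapP(A,\vb)\subset K$. For this basic GAP the cardinality ratio $|K\cap\Z^n|/|\gapP(A,\vb)|$ is already bounded by $\landauO{n}^n$ via Minkowski's second theorem, which is precisely the cardinality bound of Theorem~\ref{thm:discretejohnimproved}(ii). The inclusion factor of this basic GAP is, however, too large: on the order of $\landauO{n}^{\Theta(n)}$ directly, and it is reduced to $n^{\landauO{\ln n}}$ in Theorem~\ref{thm:discretejohnimproved}(i) through an iterative refinement of the basis.

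For the present theorem I would apply only a truncated version of that refinement. Suppose that each round of refinement improves the inclusion factor by a factor polynomial in $n$, while increasing the cardinality ratio by at most a factor polynomial in $n$. Then after $k$ rounds one obtains inclusion factor $\landauO{n}^{c_1 n/k}$ and cardinality factor $\landauO{n}^{n + c_2 k}$ for absolute constants $c_1,c_2>0$. Choosing $k \sim \ln n$ balances the two competing bounds and yields exactly $\landauO{n}^{2n/\ln n}$ and $\landauO{n}^{2n}$, respectively, as required. Note in particular that the naive approach of just passing from Theorem~\ref{thm:discretejohnimproved}(i) to a cardinality estimate via the inequality $|K\cap\Z^n|\le|\gapP(A,c_I\vb)|\le(2c_I+1)^n|\gapP(A,\vb)|$ is far too lossy: with $c_I = n^{\landauO{\ln n}}$ it gives cardinality ratio only of order $n^{\landauO{n\ln n}}$, much worse than $\landauO{n}^{2n}$. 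This is what forces the interpolation argument.

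The main obstacle will be to make the per-round trade-off quantitatively precise: each iteration should shrink $|\gapP(A,\vb)|$ by at most a polynomial factor in $n$, while simultaneously improving, by a comparable polynomial factor, the expansion of a general point of $K\cap\Z^n$ in the new basis. This requires carefully tracking the determinants of the sublattices produced by the refinement and applying a Minkowski-type lattice-point bound at each intermediate step. Once this per-round analysis is in place, Theorem~\ref{thm:onegap} follows by selecting $k$ as above.
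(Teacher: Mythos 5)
Your proposal correctly identifies the structural point that one cannot simply derive the cardinality bound from the inclusion bound (the loss $(2c_I+1)^n$ with $c_I=n^{\landauO{\ln n}}$ is indeed fatal), and that a single basis good for both purposes is needed. However, the construction you sketch does not work as stated. First, the arithmetic of your trade-off is internally inconsistent: if each ``round'' improves the inclusion factor only by a factor polynomial in $n$, then starting from an inclusion factor of order $\landauO{n}^{\Theta(n)}$ you reach, after $k$ rounds, an exponent of order $\Theta(n)-\landauO{k}$; with $k\sim\ln n$ this is still $\landauO{n}^{\Theta(n)}$, nowhere near the claimed $\landauO{n}^{c_1 n/k}=\landauO{n}^{c_1 n/\ln n}$. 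To reach that exponent each round would have to shrink the inclusion factor superpolynomially, and no such refinement step is defined or justified (nor is the proof of Theorem~\ref{thm:discretejohnimproved}(i) actually an iterative refinement one could truncate: it is a one-shot application of a Seysen reduced basis). Second, your starting point is problematic: linearly independent vectors attaining the successive minima do not in general form a basis of $\Z^n$ --- the paper remarks on exactly this after the proof of Theorem~\ref{thm:discretejohnimproved}(ii) --- so the initial {\gap} of your scheme is not even eligible for an inclusion bound, and the passage to a genuine basis is precisely where the difficulty sits. In short, the ``main obstacle'' you defer to the end is the entire content of the theorem.

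The paper's actual proof uses no interpolation or iteration. It takes $\tilde K=TK$ with $\ban\subseteq\tilde K\subseteq\sqrt{n}\,\ban$, sets $\Lambda=T\Z^n$, and chooses a single Hermite--Korkine--Zolotarev reduced basis $B=(\vb_1,\dots,\vb_n)$ of $\Lambda$, with $u_i=(1/n)\Vert\vb_i\Vert^{-1}$. This one basis simultaneously satisfies two known estimates: the H{\aa}stad--Lagarias bound $\Vert\vb_i\Vert\,\Vert\dual{\vb_i}\Vert\leq(3/2)^n<n^{n/(2\ln n)}$, which substituted into the Cramer's-rule argument of part (i) gives the inclusion factor $\landauO{n}^{2n/\ln n}$, and Mahler's bound $\Vert\vb_i\Vert\leq\tfrac{1}{2}\sqrt{i+3}\,\lambda_i(\ban,\Lambda)$, which makes the basis vectors comparable to the successive minima and hence, via the discrete Minkowski inequality \eqref{eq:discreteminkowski}, gives the cardinality factor $\landauO{n}^{2n}$. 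The missing ingredient in your write-up is exactly this pair of simultaneous properties of an HKZ basis; without citing or proving such a reduction result, the argument has a genuine gap.
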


For unconditional convex bodies $K\in\Ksn$, i.e., $K$ is symmetric to
all coordinate hyperlanes, the inclusion bound can be made linear. 

\begin{proposition}
    Let $K\in\Ksn$ be an unconditional convex body.  Then there
    exists a {\gap} $\gapP(A,\vb)\subset K$ such that
    \begin{equation}
      \begin{split} 
       {\rm i)}&\,\, 
        K\cap\Z^n 
        \subseteq \symgap{A}{n\,\vb}, \\
        {\rm ii)}&\,\,
        |K\cap\Z^n|
       < O(n)^n|\symgap{A}{\vb}|.
        \end{split} 
    \end{equation}
\label{prop:unconditional}
\end{proposition}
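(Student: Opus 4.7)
The plan is to exploit the unconditionality of $K$ by choosing a GAP whose underlying lattice is $\Z^n$ itself and whose defining box is aligned with the coordinate axes. For $i=1,\dots,n$, let $\beta_i = \max\{t \geq 0 : t\,\ve_i \in K\}$ and set $B = \prod_{i=1}^n [-\beta_i, \beta_i]$, where $\ve_i$ denotes the $i$th standard unit vector. Unconditionality combined with convexity yields $[-\beta_i,\beta_i]\,\ve_i \subseteq K$ for every $i$, while $K \subseteq B$ holds by the very definition of $\beta_i$.

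The central step is to establish the sandwich $\tfrac{1}{n}B \subseteq K \subseteq B$. The right-hand inclusion is immediate; for the left one, I would take $\vx \in \tfrac{1}{n}B$ and use the representation
\begin{equation*}
\vx = \frac{1}{n}\sum_{i=1}^n (n x_i)\,\ve_i.
\end{equation*}
Since $|n x_i|\leq \beta_i$, each summand $(n x_i)\,\ve_i$ lies in the segment $[-\beta_i,\beta_i]\,\ve_i\subseteq K$, so $\vx$ is a convex combination of $n$ points of $K$ and hence belongs to $K$. With this in hand I would set $A = I$ and $\vb = (\beta_1/n,\dots,\beta_n/n)$: every $\vz\in\symgap{I}{\vb}$ lies in $\tfrac{1}{n}B\cap\Z^n\subseteq K$, while conversely $K\cap\Z^n\subseteq B\cap\Z^n = \symgap{I}{n\vb}$, which proves part i).

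For part ii), I would estimate
\begin{equation*}
\frac{|K\cap\Z^n|}{|\symgap{I}{\vb}|} \leq \prod_{i=1}^n \frac{2\lfloor \beta_i\rfloor+1}{2\lfloor \beta_i/n\rfloor+1}
\end{equation*}
factor by factor, splitting into the cases $\beta_i \geq n$ (where numerator and denominator both scale and the ratio is of order $n$) and $\beta_i < n$ (where the denominator collapses to $1$, but the numerator is at most $2n-1$). In either case each factor is $O(n)$, so the full product is $O(n)^n$ as claimed. The only nontrivial obstacle is the sandwich $\tfrac{1}{n}B\subseteq K \subseteq B$; once this is in place, both parts reduce to routine lattice-point counting in axis-aligned boxes.
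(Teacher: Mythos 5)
Your proposal is correct and follows essentially the same route as the paper: both take $A=I_n$ and $\vb=\vu/n$ with $u_i$ the maximal $i$th coordinate attained on $K$ (equal to your $\beta_i$ by unconditionality), both obtain $\tfrac1n B\subseteq K$ via the convex-combination/cross-polytope argument, and both finish with the same factor-by-factor comparison of $2\lfloor u_i\rfloor+1$ against $2\lfloor u_i/n\rfloor+1$. The only nitpick is that with your definition of $\beta_i$ the inclusion $K\subseteq B$ is the step that actually uses unconditionality (by averaging sign flips of the other coordinates), not a consequence of the definition alone, while $[-\beta_i,\beta_i]\,\ve_i\subseteq K$ needs only central symmetry and convexity.
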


As we will point out in Proposition \ref{prop:lowerbounds}, the linear inclusion
bound  in Proposition \ref{prop:unconditional} is essentially best
possible, and it might be even true that the general bound of order $n^{\landauO{\ln n}}$
in
\eqref{eq:inclusion} can be replaced by a linear or polynomial bound in
$n$.

The paper is organized as follows. In the second section we introduce
and collect some basic properties of {\gap}s approximating the lattice
points in symmetric convex bodies. In turns out that {\gap}s where the
columns of $A$ form a lattice basis of $\Z^n$ are of particular interest
and we study them in Section 3.  Finally,  Section 4 contains the proof of the
theorems and of the proposition above.

\section{Preliminaries and {\gap}s}\label{sec:preliminaries} 
For the proof of Theorem  \ref{thm:discretejohnimproved} 
it is more
convenient to introduce {\gap}s for general lattices
$\Lambda\subset\R^n$, i.e., $\Lambda=B\,Z^n$, $B\in\R^{n\times n}$  with $\det B\ne 0$. Let
$\Lat^n$ be the set of all these lattices. Following \cite{Tao:2008ul}, and
adapting their definition to our special geometric situation, we call
for a matrix $A\in\R^{n\times n}$ with columns $\va_i\in\Lambda$,
$1\leq i\leq n$, and for $\vb\in\R^n_{>0}$ the set of lattice points in
$\Lambda$ given by 
\begin{equation*}
   \gapP(A,\vb)=\{A\vz : -\vb\leq \vz\leq \vb, \vz\in\Z^n \}
\end{equation*}
a generalized symmetric arithmetic progression with respect to
$\Lambda$, for short, just {\gap}.

Actually, Tao and Vu defined {\gap}s more generally, namely, for general
$n\times m$ matrices $A$. 
 In our
geometric setting, however, this  would make the inclusion
bound needless as $A$ may consist of all (up to $\pm$) lattice
points in $K\in \Ksn$ and by letting $\vb=(1-\epsilon)\vone$, where $\vone$ is
the appropriate  all $1$-vector and $\epsilon$ an arbitrary positive
number less than 1, we get 
\begin{equation*}
  \{\vnull\} = P(A,\vb) \subset K\cap\Z^n \subset
  P(A,(1-\epsilon)^{-1}\vb) 
\end{equation*}
Moreover, Tao and Vu were mainly interested in so called infinitely
proper {\gap}s which here means $m=\mathrm{rank}(A)$, and so we restrict the
definition to the case $A\in\R^{n\times n}$, $\det A\ne 0$.

The size or cardinality of a {\gap} $\gapP(A,\vb)$  is given
\begin{equation*}
    |\gapP(A,\vb)|= \prod_{i=1}^n (2\lfloor b_i\rfloor+1),
\end{equation*}
where $\lfloor\cdot\rfloor$ denotes the floor function. In general, for
a vector $\vb\in\R^n$ we denote by $\lfloor \vb\rfloor=(\lfloor
b_1\rfloor,\dots,\lfloor b_n\rfloor)^\trans$ its integral part. 
With $\gapP_\R(A,\vb)$ we denote the parallelepiped 
\begin{equation*}
  \gapP_\R(A,\vb) = \{A\vx : -\vb\leq \vx\leq \vb,\,\vx\in\R^n \} = \sum_{i=1}^n\conv\{-b_i\va_i,b_i\va_i\}
\end{equation*}
associated to the {\gap} $\gapP(A,\vb)$. Observe that
\begin{equation}
  \label{eq:5}
    \gapP_\R(A,\lfloor \vb\rfloor)=\conv \gapP(A,\vb).
  \end{equation}
  
Whenever we are in interested in a {\gap} $\gapP(A,\vb)$ covering most of
the lattice points in a convex body, i.e., a {\gap} which is optimal with
respect to the cardinality bound, then it suffices to assume
$\vb\in\N^n$. However, for an optimal {\gap} with respect to the
inclusion bound it might be essential to consider non-integral vectors
$\vb\in\R^n_{>0}$. 
This is also reflected by the next example showing that those {\gap}s yielding an optimal
cardinality bound can be different from those leading to an optimal
inclusion bound.

\begin{proposition} Let $n\geq 2$.  There exists a $K\in\Ksn$ such
  that   any {\gap} $\gapP(A,\vb)\subset K$ covering most of the lattice
  points of $K$ is not an optimal {\gap} with respect to inclusions,
  i.e., there exists another {\gap} $\gapP(\ov A,\ov \vb)\subset K$ such that 
for any $t> 1$ with $K\cap\Z^n\subseteq \gapP(A,t\,\vb)$ there exits a
$\ov t<t$ with $K\cap\Z^n\subseteq \gapP(\ov A,\ov t\,\ov\vb)$.
\label{prop:cardinlu}
\end{proposition}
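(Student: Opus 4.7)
The plan is to exhibit an explicit $2$-dimensional hexagon and then trivially promote it to $\R^n$. Fix a large integer $N$ and consider in $\R^2$ the $o$-symmetric hexagon
\begin{equation*}
    P := \conv\bigl\{(\pm 2N,0),\,(\pm N,\pm 1)\bigr\},
\end{equation*}
whose lattice points are $\{(k,0):|k|\leq 2N\}\cup\{(k,\pm 1):|k|\leq N\}$, a set of cardinality $8N+3$. For $n\geq 3$, I would set $K := P\times[-1,1]^{n-2}\in\Ksn$, so the lattice points decompose as $(P\cap\Z^2)\times\{-1,0,1\}^{n-2}$, and the analysis reduces to the planar picture (cardinalities pick up a uniform factor $3^{n-2}$ and inclusion factors remain unchanged).

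The first step is to identify the cardinality-maximal {\gap} contained in $P$. Since $|\gapP(A,\vb)|=(2\lfloor b_1\rfloor+1)(2\lfloor b_2\rfloor+1)$, any {\gap} of cardinality at least $6N+3=3(2N+1)$ must have one factor equal to $3$ and the other to $2N+1$ (the constraint that $N\va_i\in P$ rules out longer one-dimensional progressions $\va_i=\pm 2 e_1$). A short case analysis, using that $N\va_1\in P$ forces $\va_1\in\{\pm e_1,\pm 2 e_1\}$ and that then the corners $\pm \va_1\pm\va_2$ must lie in $P$, shows that the unique such {\gap} (up to reflections) is $\gapP(I_2,(N,1))$, which covers $6N+3$ of the $8N+3$ lattice points of $P$ -- in particular, it covers ``most''.

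The second step is to compute the inclusion factor of this {\gap}: to reach the extremal lattice points $(\pm 2N,0)$ by a dilate $\gapP(I_2,t(N,1))$, one needs $tN\geq 2N$, so the optimal dilation is $t^\ast=2$. I would then exhibit the competing {\gap} $\gapP(\ov A,\ov\vb)$ given by $\ov A:=I_2$ and $\ov\vb:=(4N/3,\,2/3)$. Because $\ov\vb$ lies on the upper-right edge of $P$ (indeed $4N/3+N\cdot 2/3=2N$), the parallelepiped $\gapP_\R(I_2,\ov\vb)$ sits inside $P$, so $\gapP(I_2,\ov\vb)\subset P$. A direct verification gives
\begin{equation*}
    \ov t \;=\; \max\!\left(\tfrac{2N}{4N/3},\;\tfrac{1}{2/3}\right)\;=\;\tfrac{3}{2}\;<\;2,
\end{equation*}
so every lattice point of $P$ is contained in $\gapP(I_2,\tfrac{3}{2}\ov\vb)=\gapP(I_2,(2N,1))$. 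Hence for every $t>1$ for which the original {\gap} achieves $P\cap\Z^2\subseteq\gapP(I_2,t(N,1))$ (necessarily $t\geq 2$), the choice $\ov t=3/2<t$ works for the competitor.

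The main obstacle is the uniqueness argument for the cardinality maximizer: one must rule out {\gap}s built from non-standard integer bases of sublattices of $\Z^2$ (e.g.\ using a column $\va_1=2e_1$) attaining cardinality $6N+3$. This is handled by exploiting the geometric constraints $\pm\va_1\pm\va_2\in P$, which severely restrict the possible column vectors and ultimately force $A=\pm I_2$. Once this uniqueness is in place, the two explicit calculations above complete the argument.
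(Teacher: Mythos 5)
Your construction follows essentially the same strategy as the paper's proof: an explicit planar $o$-symmetric hexagon whose cardinality-maximal {\gap} is pinned down by the constraints $|x_2|\leq 1$ and $\pm\beta_1\va_1\pm\beta_2\va_2\in K$, an axis-aligned competitor with a strictly smaller inclusion dilation, and a prism construction for $n\geq 3$; only the hexagon differs (the paper uses $\conv\{\pm(3,0)^\trans,\pm(-3,1)^\trans,\pm(-1,1)^\trans\}$ with a skew maximizer, you use $\conv\{(\pm 2N,0),(\pm N,\pm1)\}$ with $A=I_2$).

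One imprecision to repair: the cardinality of a {\gap} depends only on $\lfloor\vb\rfloor$, so the cardinality maximizers are not just $\gapP(I_2,(N,1))$ but the whole family $\gapP(\pm I_2,\vb)$ with $N\leq b_1<N+1$ and $1\leq b_2<2$, and for these the least admissible dilation is $t\geq 2N/b_1>2N/(N+1)$, not $t^\ast=2$. Since the proposition quantifies over \emph{any} maximizer, you must beat this weaker threshold; your competitor with $\ov t=3/2$ still does, because $2N/(N+1)>3/2$ exactly when $N>3$, so the argument goes through for $N\geq 4$ (the paper handles the analogous freedom explicitly, deriving $t>3/2$ from $b_1<2$ and beating it with $\ov t=(1-\epsilon)^{-1}$ arbitrarily close to $1$). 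With that adjustment, and with the higher-dimensional case analysis spelled out to the same (modest) degree as in the paper, your proof is sound.
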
 

\begin{proof} We start with dimension $2$, and let $K=\conv\{\pm (3,0)^\trans,  \pm (-3,1)^\trans, \allowbreak\pm
(-1,1)^\trans\}$ (see Figure \ref{fig:planemaxfactor}).

 \begin{figure}[hbt]
        \centering
        \begin{subfigure}{.58\textwidth}
              \centering
              \begin{tikzpicture}[scale=0.6]
                \clip (-5.2,-2.2) rectangle (5.2,2.2);

                \coordinate (v1) at (-3,1);
                \coordinate (v2) at (-1,1);
                \coordinate (v3) at (3,0);
                \coordinate (v4) at (3,-1);
                \coordinate (v5) at (1,-1);
                \coordinate (v6) at (-3,0);
                \draw[thick] (v1)--(v2)--(v3)--(v4)--(v5)--(v6)--(v1);

                \draw[dashed,thin,fill=thesis_2dshade, fill opacity=0.5]
                    (-3,1)--(-1,1)--(3,-1)--(1,-1)--cycle;
                \draw[dashed,thin]
(-5.4,1.8)--(-1.8,1.8)--(5.4,-1.8)--(1.8,-1.8)--cycle;

                \draw [thick, thesis_2dcontrast,-latex,opacity=0.8] (0,0) -- (1,0);
                \draw [thick, thesis_2dcontrast,-latex,opacity=0.8] (0,0) -- (-2,1);

                \foreach \x in {-7,-6,...,7}{
                  \foreach \y in {-7,-6,...,7}{
                    \node[draw,circle,inner sep=0.75pt,fill] at (\x,\y) {};
                  }
                }
            \end{tikzpicture}
        \end{subfigure}
        \begin{subfigure}{.38\textwidth}
              \centering
              \begin{tikzpicture}[scale=0.6]
                \clip (-3.2,-2.2) rectangle (3.2,2.2);

                \coordinate (v1) at (-3,1);
                \coordinate (v2) at (-1,1);
                \coordinate (v3) at (3,0);
                \coordinate (v4) at (3,-1);
                \coordinate (v5) at (1,-1);
                \coordinate (v6) at (-3,0);
                \draw[thick] (v1)--(v2)--(v3)--(v4)--(v5)--(v6)--(v1);

                \draw[dashed,thin,fill=thesis_2dshade, fill opacity=0.5]
                    (-3,0.8)--(3,0.8)--(3,-0.8)--(-3,-0.8)--cycle;

                \draw [thick, thesis_2dcontrast,-latex,opacity=0.8] (0,0) -- (1,0);
                \draw [thick, thesis_2dcontrast,-latex,opacity=0.8] (0,0) -- (0,1);

                \foreach \x in {-7,-6,...,7}{
                  \foreach \y in {-7,-6,...,7}{
                    \node[draw,circle,inner sep=0.75pt,fill] at (\x,\y) {};
                  }
                }
            \end{tikzpicture}
        \end{subfigure}
        \caption{Different optimal {\gap}s}
         \label{fig:planemaxfactor}
    \end{figure}

We will argue that  an optimal cardinality {\gap} $P(A,\vb)\subseteq
K$ contains 9 out the 13 lattice points in $K$. To this end we may
assume  that the columns $\va_i$ of $A$ belong to $K$, i.e.,
$\va_i\in K$ and $\vb\geq\vone$. Otherwise, we could only cover
lattice points on a line which would be at most 7. Since for all
$\vx\in K$ we have $|x_2|\leq 1$, and since also the sum $\va_1+\va_2$
has to belong to $K$,  
there is at most one column  $\va_i$ of $A$ having a non-zero last coordinate. 

If there would be none, then again only the $7$ points with last coordinate $0$ could
be covered. 

Next assume that $\va_2$ is the vector having last coordinate
non-zero and let $\va_1$ be the vector with last coordinate $0$. The only possibility so
that $\va_1\pm\va_2$ belong to $K$ are (up to sign) the one 
depicted in the left figure, i.e.,  $\va_1=(1,0)^\trans$ and
$\va_2=(-2,1)^\trans$, and for any $\vb$ with  $1\leq b_i<2$, $i=1,2$,
the {\gap} $\gapP(A,\vb)$ covers 9 out of the 13 lattice points of $K$.     
Hence, the {\gap}s covering the maximal amount of lattice points of $K$
are -- up to $\pm$ and permutation of the columns of $A$ -- given by  $\gapP(A,\vb)$ for any $\vb$ with  $1\leq b_i<2$, $i=1,2$.   
Sine $(3,0)^\trans\in K$, we observe that 
in order to cover all the points of 
$K\cap\Z^2$ by $\gapP(A,t\,\vb)$ we must have $t>3/2$.

On the other hand, if we take for the columns of $\ov A$ the vectors
$(1,0)^\trans$ and $(0,1)^\trans$ and setting $\ov\vb=(3,1-\epsilon)$ we
get $|\gapP(\ov A,\ov \vb)|=7$,  but  $K\cap\Z^2 \subset
  P(\ov A,(1-\epsilon)^{-1}\ov \vb)$ for any $\epsilon\in (0,1)$ (cf.~right
  hand side picture in Figure \ref{fig:planemaxfactor}).

This verifies the assertion in the plane. 
By building successively prisms over $Q$ the example can be extended to
all dimensions.

\end{proof}

\section{Unimodular {\gap}s}
Without loss of generality we consider here only the case
$\Lambda=\Z^n$. The group of all unimodular matrices, i.e., integral $n\times n$-matrices of
determinant $\pm 1$,  is denoted by
$\GL(n,\Z)$; it consists of all lattices bases of $\Z^n$. 
Apparently, if $K\cap \Z^n$ contains a lattice basis of $\Z^n$ and
$K\cap\Z^n\subseteq \gapP(A,\vb)$ then $A\in \GL(n,\Z)$. This basically
shows that for the inclusion bound it suffices to consider {\gap}s $\gapP(U,\vb)$ with $U\in
\GL(n,\Z)$. We will call such a {\gap}, an unimodular {\gap}. 

\begin{proposition} Let $c=c(n)\in\R_{>0}$ be a constant depending on
  $n$. The following statements are equivalent.
  \begin{enumerate}
    \item For every $K\in\Ksn$ there exists a {\gap} $\gapP(A,\vb)\subset
      K$ such that $K\cap\Z^n\subset \gapP(A,c\,\vb)$.
    \item   For every $K\in\Ksn$ there exists an unimodular  {\gap}  $\gapP(U,\vb)\subset
      K$ such that $K\cap\Z^n\subset \gapP(U,c\,\vb)$.
 \end{enumerate} 
\label{prop:1} 
\end{proposition}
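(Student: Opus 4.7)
The direction $(ii) \Rightarrow (i)$ is trivial, since any unimodular {\gap} is in particular a {\gap}.

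For $(i) \Rightarrow (ii)$, take $K \in \Ksn$ and use $(i)$ to fix a {\gap} $\gapP(A,\vb)\subseteq K$ with $K\cap\Z^n \subseteq \gapP(A,c\vb)$. The covering inclusion $\gapP(A,c\vb)\subseteq A\Z^n$ forces $K\cap\Z^n\subseteq A\Z^n$, so the sublattice $\Lambda':=\linz(K\cap\Z^n)$ satisfies $\Lambda'\subseteq A\Z^n\subseteq\Z^n$. In the favourable case $\Lambda'=\Z^n$, we have $\Z^n\subseteq A\Z^n\subseteq\Z^n$, which forces $A\in\GL(n,\Z)$; the given {\gap} is already unimodular and $(ii)$ follows. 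This is exactly the observation highlighted just before the proposition.

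When $\Lambda'\subsetneq\Z^n$, my plan is to construct an unimodular {\gap} by hand, exploiting the freedom to include ``inert'' columns in $U$. Let $k$ be the rank of $\Lambda'$ and $W:=\lin_\R\Lambda'\subseteq\R^n$; choose a $\Z$-basis $\vu_1,\dots,\vu_k$ of the primitive hull $W\cap\Z^n$ and extend to a $\Z$-basis $\vu_1,\dots,\vu_n$ of $\Z^n$, so that $U:=[\vu_1\mid\dots\mid\vu_n]\in\GL(n,\Z)$. Taking $b_i'<1$ for $i>k$ forces $z_i=0$ via the integer constraint $|z_i|\leq b_i'$, making those columns inert in $\gapP(U,\vb')$; they are present solely to ensure unimodularity. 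The remaining parameters $b_1',\dots,b_k'$ are then chosen so that the effective $k$-dimensional sub-{\gap} spanned by $\vu_1,\dots,\vu_k$ fits inside $K\cap W$ and covers $K\cap\Z^n$ with scaling $c$. When $k<n$ this reduces to $(i)$ for the lower-dimensional body $K\cap W$ in the lattice $W\cap\Z^n\cong\Z^k$, which is available by induction on $n$.

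The main obstacle I anticipate is the sub-case $k=n$: here $\Lambda'$ has full rank but is still a proper sublattice of $\Z^n$, so $W=\R^n$ and no drop in dimension is possible. My plan is to pick a primitive lattice vector $\vv\in K\cap\Z^n$ (which exists as soon as $\Lambda'\neq\{0\}$), extend $\vv$ to a $\Z$-basis of $\Z^n$, and take $\vv$ as an active column of $U$, with the completion columns carried as inert ones. Showing that the passage from $\gapP(A,\vb)$ to this unimodular $\gapP(U,\vb')$ preserves the covering constant $c$ is the delicate step of the argument.
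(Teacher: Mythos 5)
Your reduction to the case where $K\cap\Z^n$ generates all of $\Z^n$ is fine, and in that case your conclusion that $A\in\GL(n,\Z)$ is exactly the observation the paper makes before the proposition. But the remaining cases are where the content lies, and there the proposal has genuine gaps. For $k<n$ you invoke ``(i) for the lower-dimensional body $K\cap W$ by induction on $n$'' --- but (i) is a \emph{hypothesis}, assumed only for bodies in $\Ksn$ with the fixed constant $c=c(n)$; you are not entitled to it in dimension $k$ with the same constant, and there is no induction to run. For $k=n$ with $\Lambda'$ a proper full-rank sublattice, taking a single primitive vector $\vv$ as the only active column gives a one-dimensional {\gap} inside $K$, and covering the full-dimensional set $K\cap\Z^n$ by $\gapP(U,c\,\vb')$ would then require the fixed constant $c$ to activate the inert columns ($c\,b_i'\geq 1$ with $b_i'<1$) \emph{and} to stretch them far enough --- nothing in the hypothesis guarantees this. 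You flag this yourself as ``the delicate step,'' but it is not a technicality: it is the whole proof, and the construction as described does not close it.

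The paper avoids all of this with a single dilation trick that your argument is missing. Since $K$ is full-dimensional, there is an $l\in\N$ such that $l\,K$ contains a basis of $\Z^n$. Apply hypothesis (i) to $l\,K$ (which is still in $\Ksn$): the resulting {\gap} $\gapP(U,\vb)\subset lK$ with $lK\cap\Z^n\subseteq\gapP(U,c\,\vb)$ automatically has $U\in\GL(n,\Z)$, because $\gapP(U,c\,\vb)\subseteq U\Z^n$ must contain a basis of $\Z^n$. Then one scales back: $\gapP(U,l^{-1}\vb)\subseteq K\cap\Z^n\subseteq\gapP(U,c\,l^{-1}\vb)$, where both inclusions are checked directly and the second uses unimodularity of $U$ to see that $U^{-1}\va\in\Z^n$ for $\va\in K\cap\Z^n$. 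This handles every case at once --- no case distinction on the rank or index of $\linz(K\cap\Z^n)$ is needed. I would recommend you replace your case analysis by this dilation argument.
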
 
\begin{proof} Obviously, we only have to show that i) implies ii). To
  this end let
  $l\in\N$  such that $l\,K$ contains a basis of $\Z^n$. By assumption
  there exists a {\gap} $P(U,\vb)\subset lK$ such that $lK\cap\Z^n\subseteq
  P(U,c\,\vb)$ and since $lK$ contains a basis of $\Z^n$ we have
  $U\in\GL(n,\Z)$. Next we claim that
  \begin{equation}
    \label{eq:1}
     P(U,l^{-1}\vb)\subseteq K\cap\Z^n\subseteq P(U,c\,l^{-1}\vb).
   \end{equation}
   Let $\vu\in P(U,l^{-1}\vb)$. Then there exists a $\vz\in\Z^n$ with
   $\vu=U\vz$ and $-l^{-1}\vb\leq \vz\leq l^{-1}\vb$. Thus
   $l\vu=U\,l\vz$ and since $l\,\vz \in\Z^n$ we get $l\vu\in
   \gapP(U,\vb)\subset l\,K$. Hence $\vu\in K\cap\Z^n$ which shows the
   first inclusion in \eqref{eq:1}. For the second let $\va\in
   K\cap\Z^n$. Then $l\,\va\in l\,K\cap\Z^n\subseteq P(U,c\,\vb)$ and so
   there exists a $\vz\in\Z^n$ with $-c\,\vb\leq\vz\leq c\,\vb$ with
   $l\,\va=U\vz$. Hence, $\va = U\, l^{-1}\vz$ and since $U\in
   \GL(n,\Z)$ we conclude $l^{-1}\vz\in\Z^n$ which shows $\va\in  P(U,c\,l^{-1}\vb)$.
\end{proof} 

Next we want to point out a relation between {\gap}s and approximations
of a convex body by an ``unimodular'' parallelepiped
$\gapP_\R(U,\vu)$, $U\in\GL(n,\Z)$. To this we first
note that 
\begin{lemma} Let $K\in \Ksn$ containing $n$ linearly independent
  points $\beta\va_i$ with $\beta\in\R_{>0}$ and
  $\va_i\in\Z^n$, $1\leq i\leq n$.  Then for any unimodular {\gap}
  $\gapP(U,\vu)$ with $K\subseteq \gapP_\R(U,\vu)$ we have $u_i\geq \beta$,
  $1\leq i\leq n$.
\label{lem:1} 
\end{lemma}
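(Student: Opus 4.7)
The plan is to translate the hypothesis $\beta\va_i \in K \subseteq \gapP_\R(U,\vu)$ into a linear-algebraic statement about the integer matrix $U^{-1}$ applied to the $\va_i$, and then exploit unimodularity to force integrality of the resulting coordinate vectors.

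First I would use $K\subseteq \gapP_\R(U,\vu)$ to write each point $\beta\va_i \in K$ as $\beta\va_i = U\vx_i$ with $\vx_i \in \R^n$ and $-\vu\leq \vx_i \leq \vu$ componentwise. Solving gives $\vx_i = \beta\, U^{-1}\va_i$. Since $U\in\GL(n,\Z)$, its inverse $U^{-1}$ is also integral, so $\vw_i := U^{-1}\va_i \in \Z^n$ and $\vx_i = \beta\vw_i$. Because $U^{-1}$ is invertible and the $\va_i$ are linearly independent, the $\vw_i$ form the columns of an invertible integer matrix $W = [\vw_1\,|\,\cdots\,|\,\vw_n] \in \Z^{n\times n}$.

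The key observation is then: since $W$ is nonsingular, none of its rows can vanish. Fix any $j\in\{1,\dots,n\}$; then there exists some index $i=i(j)$ with $(\vw_i)_j \neq 0$, and as $\vw_i \in \Z^n$ this forces $|(\vw_i)_j|\geq 1$. The containment $-\vu\leq \vx_i\leq \vu$ evaluated at coordinate $j$ gives
\begin{equation*}
    u_j \;\geq\; |(\vx_i)_j| \;=\; \beta\,|(\vw_i)_j| \;\geq\; \beta,
\end{equation*}
which is the desired bound. Running $j$ over all coordinates completes the proof.

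I do not expect a serious obstacle here; the only point requiring care is invoking $U^{-1}\in\Z^{n\times n}$, which is precisely the content of $U$ being unimodular, together with the ``integer-linear-independence'' argument that extracts an entry of absolute value $\geq 1$ from each row of $W$.
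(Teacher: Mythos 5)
Your proof is correct and follows essentially the same route as the paper: both write $\beta\va_i=U\vx_i$, use unimodularity of $U$ to conclude $\vx_i\in\beta\Z^n$, and then use linear independence to find, for each coordinate $j$, some $\vx_i$ with a nonzero (hence $\geq\beta$ in absolute value) $j$th entry. No issues.
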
 
\begin{proof} Let $\beta\va_i= U\vx_i$ with
  $-\vu\leq\vx_i\leq\vu$, $\vx_i\in\R^n$. Since $U\in\GL(n,\Z)$ we get
  $\vx_i\in\beta\Z^n$, which shows that for each non-zero coordinate
  $j$, say, of $\vx_i$  we have $u_j\geq \beta$. Since
  $\vx_1,\dots,\vx_n$ are linearly independent for each coordinate $k$
  we can find a vector $\vx_l$ whose $k$th coordinate is non-zero.      
\end{proof} 

Observe, for an unimodular {\gap}
$\gapP(U,\vu)$ we
have $\gapP(U,\vu)= \gapP_\R(U,\vu)\cap\Z^n$. 

\begin{proposition} 
Let $c=c(n)\in\R_{>0}$ be a constant depending on
  $n$. The following statements are equivalent.
\begin{enumerate} 
  \item For every $K\in\Ksn$ there exists a {\gap} $\gapP(A,\vb)\subset
      K$ such that $K\cap\Z^n\subseteq \gapP(A,c\,\vb)$.
    \item   For every $K\in\Ksn$ there exists an unimodular  {\gap}  $\gapP(U,\vu)\subset
      K$ such that 
\begin{equation*}
    \gapP_\R(U,\vu)\subseteq K \subset \gapP_\R(U,c\,\vu). 
\end{equation*} 
\end{enumerate} 
\end{proposition}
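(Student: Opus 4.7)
The implication (ii) $\Rightarrow$ (i) is immediate: since $U^{-1}\in\Z^{n\times n}$, one has $\gapP_\R(U,\vw)\cap\Z^n=\gapP(U,\vw)$ for every $\vw\in\R^n_{>0}$, and intersecting the chain $\gapP_\R(U,\vu)\subseteq K\subseteq \gapP_\R(U,c\vu)$ with $\Z^n$ yields (i) with the same constant.

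For the converse (i) $\Rightarrow$ (ii) my plan is to apply (i) not to $K$ itself but to a sufficiently large dilate $lK$ and then rescale by $l^{-1}$. I choose $l\in\N$ so large that $lK$ contains a lattice basis of $\Z^n$; by Proposition~\ref{prop:1} the GAP produced by (i) for $lK$ may then be assumed to be unimodular, giving $U\in\GL(n,\Z)$ and $\vb\in\R^n_{>0}$ with $\gapP(U,\vb)\subseteq lK$ and $lK\cap\Z^n\subseteq \gapP(U,c\vb)$. Convexity of $lK$ together with \eqref{eq:5} gives $\gapP_\R(U,\lfloor\vb\rfloor)=\conv\gapP(U,\vb)\subseteq lK$, so that, setting $\vu:=l^{-1}\lfloor\vb\rfloor$ and scaling by $l^{-1}$, I obtain the inner inclusion $\gapP_\R(U,\vu)\subseteq K$.

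The main obstacle is the outer inclusion, where a lattice-point containment must be upgraded to a set containment. The plan is a density argument: given $\vx\in K$ and $l$ large, first contract $l\vx$ slightly toward the origin (by a factor of order $1-l^{-1/2}$) so that, thanks to the inradius of $K$, the nearest integer point $\vy$ to the contracted vector still lies in $lK$; then $\vy\in lK\cap\Z^n\subseteq \gapP_\R(U,c\vb)$, whence $\vy/l\in\gapP_\R(U,cl^{-1}\vb)$ with $|\vx-\vy/l|=O(l^{-1/2})$. This shows $K\subseteq \gapP_\R(U,cl^{-1}\vb)+\eta_l\ban$ with $\eta_l\to 0$. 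Because $\gapP_\R(U,cl^{-1}\vb)$ already contains the bulk of $K$ and hence a fixed ball around the origin, the Minkowski error $\eta_l\ban$ can be absorbed into a $(1+o_l(1))$-dilation of the parallelepiped. Combining with the componentwise bound $\lfloor\vb\rfloor\geq \vb/2$ (valid once $b_i\geq 1$, a condition satisfied for $l$ large since any unimodular GAP covering an $n$-dimensional lattice section of $lK$ forces $cb_i\geq 1$) yields $\gapP_\R(U,\vu)\subseteq K\subseteq \gapP_\R(U,c'\vu)$ with $c'\leq 2c(1+o_l(1))$, establishing (ii) up to the usual constant-factor adjustment in $c$. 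The technically delicate ingredient is the density/absorption step; once it is in place, the rest is bookkeeping with floor functions and rescaling.
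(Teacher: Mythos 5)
Your direction (ii) $\Rightarrow$ (i) matches the paper's. For (i) $\Rightarrow$ (ii) you take a genuinely different route for the hard outer inclusion: the paper replaces $K$ by a rational polytope $Q$ with $K\subset(1+\epsilon)Q$, scales so that $mQ$ is \emph{integral}, and then upgrades the lattice-point inclusion $mQ\cap\Z^n\subseteq\gapP(U,c\vu)$ to the set inclusion $mQ\subseteq\gapP_\R(U,c\vu)$ simply by taking convex hulls, since $mQ=\conv(mQ\cap\Z^n)$; your rounding/density argument (contract $l\vx$, round to a lattice point of $lK$, absorb the $O(l^{-1/2})$ error into a $(1+o_l(1))$-dilation) is a workable substitute for that step, provided you also justify that $\gapP_\R(U,c\,l^{-1}\vb)$ contains a ball of radius bounded below uniformly in $l$ (this follows because $lK\cap\Z^n$ contains $n$ linearly independent points of norm of order $l$, so $\gapP_\R(U,c\vb)$ contains a cross-polytope of inradius of order $l$).

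The genuine gap is quantitative: the proposition asserts equivalence with the \emph{same} constant $c$, and your argument only delivers (ii) with a constant $2c(1+o_l(1))$, the factor $2$ coming from the crude bound $\lfloor b_i\rfloor\geq b_i/2$. "Up to the usual constant-factor adjustment" is exactly what the statement does not permit. The paper avoids this by forcing the coordinates of $\vu$ to be large: Lemma \ref{lem:1}, applied to $n$ linearly independent lattice points of the form $\beta\va_i$ contained in the scaled body, gives $u_i\geq 1+c/\epsilon$ and hence $u_i/\lfloor u_i\rfloor\leq 1+\epsilon/c$, so the floors cost only $(1+o(1))$ rather than $2$. Your setup admits the same fix --- since $lK$ contains linearly independent lattice points of norm of order $l$, Lemma \ref{lem:1} forces $c\,b_i\gtrsim l\to\infty$, whence $\lfloor b_i\rfloor/b_i\to 1$ --- but you only invoke $c\,b_i\geq 1$, which is not enough. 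Moreover, even after this repair you obtain for each $l$ a pair $(U_l,\vu_l)$ with constant $c(1+o_l(1))$; to land on the exact constant $c$ you still need the compactness/limit argument the paper carries out (boundedness of $m_\epsilon^{-1}\lfloor\vu_\epsilon\rfloor$ via the first inclusion, boundedness of the unimodular matrices via the volume/inradius argument, then passage to a convergent subsequence). Your proof omits this step entirely, so as written it establishes only a weaker statement than the one claimed.
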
 
\begin{proof}  We start with the implication i) implies ii). Let
  $\epsilon>0$, and let $Q\subseteq K$ be a
  $o$-symmetric rational
  polytope with $K\subset (1+\epsilon)Q$ (see, e.g., \cite[Theorem 1.8.19]{Schneider:2014td}). Moreover, let $m\in\N$ such
  i) $mQ$ is an integral polytope, i.e., all vertices are in $\Z^n$, and
  ii) $mQ$ contains the scaled unit vectors $c(1+c/\epsilon)\ve_i$,
  $1\leq i\leq n$. In view of Proposition \ref{prop:1}  there exists
  an unimodular {\gap} $\gapP(U,\vu)$ such that 
  \begin{equation*}
      \gapP(U,\vu) \subset mQ\cap\Z^n\subseteq \gapP(U,c\,\vu).
  \end{equation*}
The polytopes $\gapP_\R(U,\lfloor \vu\rfloor )$ and $mQ$ are integral  and  so we
get  
\begin{equation}
\begin{split} 
   \gapP_\R(U,\lfloor \vu\rfloor ) & = \conv( \gapP_\R(U,\lfloor
   \vu\rfloor )\cap \Z^n)=\conv \gapP(U,\lfloor
   \vu\rfloor )\\ &  \subseteq \conv \gapP(U, \vu) \subseteq
   \conv(mQ\cap\Z^n)=mQ\subseteq mK.
\end{split} 
\label{eq:first}
\end{equation}
Since $mQ$ integral we have $mQ\subseteq \gapP_\R(U,c\,\vu)$ and 
due to Lemma \ref{lem:1} we know for the entries of $\vu$ that
$u_i\geq 1+c/\epsilon$, $1\leq i\leq n$, which implies that
\begin{equation*}
   \frac{u_i}{\lfloor u_i\rfloor}\leq \frac{u_i}{ u_i-1}\leq
   1+\frac{\epsilon}{c} 
\end{equation*}
and thus 
$c\,\vu\leq (c+\epsilon) \lfloor \vu\rfloor$. Hence, 
\begin{equation*}
\begin{split} 
  mQ & =\conv(mQ\cap\Z^n) \subseteq \conv \gapP(U, c\,\vu) \\ &\subseteq 
  \gapP_\R(U, c\,\vu) \subseteq 
  \gapP_\R(U, (c+\epsilon)\,\lfloor\vu\rfloor),
\end{split} 
\end{equation*}
 and with \eqref{eq:first}
\begin{equation} 
      \gapP_\R(U,m^{-1}\lfloor \vu\rfloor) \subseteq   K\subseteq \gapP_\R(U,
      (1+\epsilon)\,(c+\epsilon)\,m^{-1}\lfloor\vu\rfloor).
\end{equation} 
Observe, that actually $U=U_\epsilon,\vu=\vu_\epsilon$ as well as 
$m=m_\epsilon$ depend on the chosen $\epsilon$. 
Now, since $K$ is bounded and  all entries of $U$ are integral, the first inclusion shows that the sequence
$m_\epsilon^{-1}\lfloor \vu_\epsilon\rfloor$, $\epsilon>0$, has to be
bounded.  Therefore, we may assume that 
it converges to $\ov\vu$ as $\epsilon$ approaches $0$. Next, let us
assume that  a
sequence of a (fixed) column vector of the unimodular matrices
$U_\epsilon$ is  unbounded. Since
$\vol(\gapP_\R(U_\epsilon,\vone))=2^n$ and since
$m_\epsilon^{-1}\lfloor \vu_\epsilon\rfloor$ is bounded this shows
that the inradius of $\gapP_\R(U_\epsilon,
      (1+\epsilon)\,(c+\epsilon)\,m_\epsilon^{-1}\lfloor\vu_\epsilon\rfloor)$ converges
      to $0$ as $\epsilon$ tends to $0$. Hence, also $U_\epsilon$
      converges to an unimodular matrix $\ov U$ and so we have shown 
\begin{equation*}
  \gapP_\R(\ov U,\ov\vu) \subseteq   K\subseteq \gapP_\R(\ov U,c\,\ov\vu). 
\end{equation*} 
For the reverse implication we assume that there exists an unimodular
{\gap} $\gapP(U,\vu)$ fullfiling ii). Then 
\begin{equation*}
   \gapP_\R(U,\vu)\cap\Z^n \subseteq   K\cap\Z^n \subseteq
   \gapP_\R(U,c\,\vu)\cap\Z^n, 
\end{equation*}
and by the unimodularity of $U$ we have $\gapP_\R(U,\vu)\cap\Z^n =
\gapP(U,\vu)$ as well as $\gapP_\R(U,c\,\vu)\cap\Z^n = \gapP(U,c\,\vu)$.
\end{proof}

We close this section with  lower bounds on the factors in
\eqref{eq:inclusion} and \eqref{eq:cardinality} of Theorem \ref{thm:discretejohnimproved}.

\newpage
\begin{proposition}\hfill
\label{prop:lowerbounds}  
\begin{enumerate} 
\item 
Let $\tau=\tau(n)\in\R_{>0}$ be a constant depending on
  $n$ such that for every $K\in\Ksn$ there exists a {\gap} $\gapP(A,\vb)\subset
      K$ such that $K\cap\Z^n\subseteq \gapP(A,\tau\,\vb)$. Then $\tau\geq
      n!^{1/n}>\frac{1}{\mathrm e} n$. 
\item  Let $\nu=\nu(n)\in\R_{>0}$ be a constant depending on
  $n$ such that for every $K\in\Ksn$ there exists a {\gap}
  $\gapP(A,\vb)\subset 
      K$ such that $|K\cap\Z^n|\leq \nu\, |\gapP(A,\vb)|$. Then $\nu\geq
      (2^n+1)/3$. 
\end{enumerate}
\end{proposition}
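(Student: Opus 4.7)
The approach in both parts is to exhibit an explicit body $K\in\Ksn$ achieving the required lower bound and to translate the {\gap} hypothesis, via the previous proposition (the equivalence between {\gap} approximations and inscribed/circumscribed unimodular parallelepipeds) and Lemma~\ref{lem:1}, into either a volumetric or a combinatorial estimate.

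For (i) I would take $K$ to be the cross-polytope $\conv\{\pm\ve_1,\dots,\pm\ve_n\}$, which has volume $2^n/n!$ and contains the $n$ linearly independent lattice vectors $\ve_i$. A valid $\tau$ supplies, via the previous proposition, a unimodular $U\in\GL(n,\Z)$ and $\vu\in\R_{>0}^n$ with $\gapP_\R(U,\vu)\subseteq K\subseteq\gapP_\R(U,\tau\vu)$. The inner inclusion, together with $|\det U|=1$, yields
\[
2^n\prod_{i=1}^n u_i=\vol(\gapP_\R(U,\vu))\leq\vol(K)=\frac{2^n}{n!},
\]
i.e.\ $\prod_i u_i\leq 1/n!$. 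For the outer inclusion Lemma~\ref{lem:1} (applied with $\beta=1$ and $\va_i=\ve_i$) gives $\tau u_i\geq 1$ for all $i$, hence $\prod_i u_i\geq \tau^{-n}$. Combining the two estimates yields $\tau^n\geq n!$; Stirling's bound $n!>(n/\e)^n$ delivers the stated strict form.

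For (ii) I would consider the body $K=\conv(V\cup(-V))$ with $V=\{0,1\}^{n-1}\times\{1\}$. Writing each $\vx\in K$ as $\alpha^+\vy-\alpha^-\vz$ with $\vy,\vz\in\conv V\subseteq[0,1]^{n-1}\times\{1\}$ and $\alpha^\pm\geq 0$, $\alpha^++\alpha^-=1$, the last coordinate of $\vx$ equals $\alpha^+-\alpha^-\in[-1,1]$, so an integer last coordinate of $\vx\in K\cap\Z^n$ lies in $\{-1,0,1\}$: the values $\pm1$ force $\alpha^\mp=0$ and hence $\vx\in\pm V$, while the value $0$ forces $\alpha^+=\alpha^-=1/2$ and constrains the remaining coordinates of $\vx$ to $\{-1/2,0,1/2\}$, giving $\vx=\vnull$. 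Thus $|K\cap\Z^n|=1+2\cdot 2^{n-1}=2^n+1$. Given any {\gap} $\gapP(A,\vb)\subseteq K$, absorbing signs into $\vb$ puts every column $\va_i$ of $A$ in $V$, so each has last coordinate $1$; if $\lfloor b_i\rfloor,\lfloor b_j\rfloor\geq 1$ for two distinct indices $i\neq j$, then $\va_i+\va_j\in\gapP(A,\vb)\subseteq K$ would have last coordinate $2$, a contradiction. Hence at most one $\lfloor b_i\rfloor$ is positive, and the same last-coordinate obstruction applied to $k\va_i$ for $|k|\leq\lfloor b_i\rfloor$ further forces $\lfloor b_i\rfloor\leq 1$; therefore $|\gapP(A,\vb)|\leq 3$ and $\nu\geq(2^n+1)/3$.

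The only genuinely delicate step is the $\alpha^\pm$-decomposition in (ii) establishing that $K$ contains no lattice points other than $\vnull$ and $\pm V$; once this is in place, the restriction of {\gap} columns to the hyperplane $x_n=1$ immediately caps their size at three. Part (i) is essentially a one-line volume computation once the cross-polytope is identified as the right test body.
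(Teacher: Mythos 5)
Your proof is correct, and the two parts relate to the paper differently. For part (ii) you use exactly the paper's test body $Q=\conv(\pm(\{0,1\}^{n-1}\times\{1\}))$ and the same counting; in fact you supply the details (the $\alpha^\pm$-decomposition showing $Q\cap\Z^n=\pm V\cup\{\vnull\}$, and the last-coordinate obstruction capping the {\gap} at three points) that the paper leaves as ``easy to see,'' and your version is arguably more careful, since the paper's intermediate claim that no two \emph{distinct} nonzero lattice points of $Q$ sum into $Q$ is literally false for $\vy=-\vx$ and needs exactly the column-by-column argument you give. One small imprecision: after absorbing signs you assert \emph{every} column of $A$ lies in $V$, but a column $\va_i$ with $\lfloor b_i\rfloor=0$ need not lie in $K$ at all; this is harmless because such indices contribute a factor $1$ to $|\gapP(A,\vb)|$ and your argument only ever uses columns with $\lfloor b_i\rfloor\geq 1$. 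For part (i) your route differs from the paper's: the paper works directly with the {\gap} inside the dilated cross-polytope $m\dual{C_n}$, compares $\vol(\gapP_\R(U,\lfloor\vu\rfloor))$ with $\vol(m\dual{C_n})$, and sends $m\to\infty$ to make the floor-function loss $\prod_i\lfloor u_i\rfloor/u_i$ disappear; you instead invoke the preceding (unnumbered) equivalence proposition to pass to a genuine sandwich $\gapP_\R(U,\vu)\subseteq K\subseteq\gapP_\R(U,\tau\vu)$ for the unit cross-polytope, after which the volume comparison is immediate and no limit is needed. Both use Lemma~\ref{lem:1} in the same way; your version is cleaner on the surface but imports the compactness/limit work into the cited equivalence proposition, whereas the paper's argument is self-contained (it only needs the easier Proposition~\ref{prop:1}). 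Either way the conclusion $\tau^n\geq n!$ and the Stirling bound are obtained correctly.
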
 
\begin{proof} For i) we consider for an integer $m\in\N$ the
  cross-polytope $m\dual{C_n}=\{\vx\in\R^n : |x_1|+\cdots +|x_n|\leq
  m\}$ and let $\gapP(U,\vu)$ be a {\gap} such that
  \begin{equation}
    \label{eq:2}
    \gapP(U,\vu)\subseteq m\dual{C_n}\cap\Z^n\subseteq  \gapP(U,\tau\,\vu).
  \end{equation}
In view of Proposition \ref{prop:1}, or since $m\dual{C_n}$ contains the
unit vectors $\ve_1,\dots,\ve_n$ we have $U\in\GL(n,\Z)$. Moreover,
since $m\ve_i\in m\dual{C_n}$, $1\leq i\leq n$, we get from the second
inclusion in \eqref{eq:2} and Lemma \ref{lem:1} that $m\leq \tau\, u_i$, $1\leq
i\leq n$,  and so
\begin{equation}
  \vol(m\dual{C_n})=m^n\frac{2^n}{n!} \leq
  \tau^n\frac{2^n}{n!}\prod_{i=1}^n u_i.
  \label{eq:3} 
\end{equation}
On the other hand, the first inclusion in  \eqref{eq:2} implies
\begin{equation*}
  \gapP_\R(U,\lfloor \vu\rfloor)=\conv\gapP(U,\vu)\subseteq m\dual{C_n},
\end{equation*}
and so
\begin{equation*}
  2^n\prod_{i=1}^n\lfloor u_i\rfloor =\vol \gapP_\R(U,\lfloor
  \vu\rfloor) \leq  \vol(m\dual{C_n}).
\end{equation*}
Combined with \eqref{eq:3} we obtain
\begin{equation*}
  \tau\geq n!^{1/n}\left( \prod_{i=1}^n \frac{\lfloor
      u_i\rfloor}{u_i}\right)^{1/n}. 
\end{equation*}
This is true for any $m\in\N$, and since $u_i\to\infty$ for
$m\to\infty$, we find $\tau \geq  n!^{1/n} > \frac{n}{\mathrm{e}}$.
In order to prove ii), let $Q$ be the $o$-symmetric lattice polytope
given by $Q=\conv( \pm ([0,1]^{n-1}\times\{1\}))$. Then it is easy to
see that $Q\cap
\Z^n=\pm (\{0,1\}^{n-1}\times\{1\})\cup\{\vnull\}$ and hence,  $Q$
does not contain $\vx,\vy\in\Z^n\setminus\{\vnull\}$, $\vx\ne\vy$, and
$\vx+\vy\in Q$. Thus for any {\gap} $\gapP(A,\vb)\subset Q$ we have
$|\gapP(A,\vb)|\leq 3$ and so
\begin{equation*}
  2^{n}+1 =|Q\cap\Z^n| \leq \nu\,|\gapP(A,\vb)|\leq 3\,\nu 
\end{equation*}
yielding the desired lower bound.
\end{proof} 
\section{Proofs of the theorems}
For the proof of the inclusion bound \eqref{eq:inclusion} of Theorem
\ref{thm:discretejohnimproved} we follow essentially the proof of
\cite{Tao:2008ul}, but we will apply a different lattice reduction
taking into account also the polar lattice. More precisely, for a
lattice $\Lambda\in\Lat^n$ with basis $B=(\vb_1,\dots,\vb_n)$, i.e., $\Lambda=B\Z^n$, we
denote by
\begin{equation*}
  \dual\Lambda=\{\vy\in\R^n : \ip{\vx}{\vy}\in\Z\text{ for all }\vx\in\Lambda\}=B^{-\trans}\Z^n
\end{equation*}
its polar lattice. In particular, if  $B^{-T}=(\dual{\vb_1},\dots,\dual{\vb_n})$,
then
\begin{equation}
  \label{eq:4_1}
   \ip{\dual{\vb_i}}{\vb_j}=\delta_{i,j},
\end{equation}
where $\delta_{i,j}$ denotes the Kronecker-symbol. Now a basis $B$ of a
lattice $\Lambda$ is called Seysen reduced if
\begin{equation*}
  \label{eq:4_2}
      S(B)=\sum_{i=1}^n \Vert \vb_i\Vert^2\Vert\dual{\vb_i}\Vert^2
\end{equation*}
is minimal among all bases of $\Lambda$ (cf.~\cite{Seysen1993}). Here,
$\Vert\cdot\Vert$ denotes the Euclidean norm. Seysen proved
\begin{theorem}[\protect{\cite[Theorem 7]{Seysen1993}}] Let $\Lambda\in\Lat^n$. There exists a basis
  $B=(\vb_1,\dots,\vb_n)$ of $\Lambda$ such that $S(B)\leq
  n^{O(\ln n)}$. In particular, for $1\leq i\leq n$ 
  \begin{equation}
    \Vert \vb_i\Vert\,\Vert\dual{\vb_i}\Vert\leq  n^{O(\ln n)}.
    \label{eq:seysen}
\end{equation} 
 \label{thm:seysen}
\end{theorem}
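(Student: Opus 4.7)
The plan is to show that the Seysen functional $S(B) = \sum_{i=1}^n \Vert \vb_i\Vert^2 \Vert\dual{\vb_i}\Vert^2$ attains its minimum over all bases of $\Lambda$, and that at any minimizing basis this value is bounded by $n^{O(\ln n)}$; the individual bound \eqref{eq:seysen} will then follow trivially.

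First I would establish existence of a minimizer. From \eqref{eq:4_1} and Cauchy--Schwarz, $\Vert\vb_i\Vert\,\Vert\dual{\vb_i}\Vert \geq \langle \dual{\vb_i},\vb_i\rangle = 1$, so $S(B)\geq n$ for every basis $B$. Along any sequence $B_k$ with $S(B_k)$ bounded, each $\dual{\vb_{k,i}}$ is a non-zero vector of the fixed lattice $\dual\Lambda$, hence has norm bounded below by some constant $\rho(\Lambda)>0$; consequently $\Vert\vb_{k,i}\Vert\leq \sqrt{S(B_k)}/\rho(\Lambda)$. The candidate columns therefore lie in a bounded region of $\R^n$, and since $\Lambda$ is discrete only finitely many remain, so a minimum is attained.

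Next I would extract first-order optimality conditions. For any pair $i\neq j$ and any $k\in\Z$, the elementary unimodular move $\vb_i\mapsto \vb_i+k\vb_j$ forces $\dual{\vb_j}\mapsto \dual{\vb_j}-k\dual{\vb_i}$ (by \eqref{eq:4_1}) and leaves all other primal/dual vectors unchanged. Computing $S(B')-S(B)$ gives a quadratic in $k$, and requiring it to be non-negative for $k=\pm 1$ yields an inequality of the form
\[
 \bigl|\,\langle \vb_i,\vb_j\rangle\,\Vert\dual{\vb_i}\Vert^2 - \Vert\vb_j\Vert^2\,\langle \dual{\vb_i},\dual{\vb_j}\rangle\,\bigr| \;\leq\; \Vert\vb_j\Vert^2\,\Vert\dual{\vb_i}\Vert^2,
\]
which simultaneously controls the off-diagonal entries of the Gram matrices $B^\trans B$ and $(B^\trans B)^{-1}$.

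The heart of the argument is to bootstrap these local inequalities into a global bound. My plan is to reorder the basis so that $\Vert\vb_i\Vert\,\Vert\dual{\vb_i}\Vert$ is non-decreasing and to partition the indices into dyadic blocks according to the magnitude of this product. The optimality inequalities above show that in block form the Gram matrix is essentially block-diagonal up to controlled error, and an induction on the number of blocks (of which there are $O(\ln n)$ many) combined with Minkowski's second theorem applied to each block and its dual yields $S(B)\leq n^{O(\ln n)}$. Since every summand is non-negative, $\Vert\vb_i\Vert^2\Vert\dual{\vb_i}\Vert^2\leq S(B)$, and \eqref{eq:seysen} follows by taking square roots.

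The principal obstacle is this last step: a naive use of the local optimality bound only yields an estimate of order $2^n$ for $S(B)$. Obtaining $n^{O(\ln n)}$ requires the dyadic/recursive structure together with a careful multiplicative combination of the primal and dual inequalities; pinning down this recursion with the right constants is the delicate technical core of Seysen's original argument.
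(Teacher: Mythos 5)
A preliminary remark: the paper does not prove this statement at all --- it is imported verbatim from Seysen \cite{Seysen1993} and used as a black box --- so there is no internal proof to compare yours against; your proposal has to stand on its own. The preliminary parts of it do stand: the lower bound $S(B)\geq n$ via \eqref{eq:4_1} and Cauchy--Schwarz, the existence of a global minimizer (dual vectors are nonzero vectors of the fixed lattice $\dual\Lambda$, hence bounded below in norm, so the candidate columns lie in a bounded region of the discrete set $\Lambda$), and the first-order inequality coming from $\vb_i\mapsto\vb_i+k\vb_j$, whose effect $\dual{\vb_j}\mapsto\dual{\vb_j}-k\dual{\vb_i}$ on the dual basis you compute correctly. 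The genuine gap is exactly the step you yourself flag as the ``delicate technical core'': the passage from these local optimality conditions to $S(B)\leq n^{O(\ln n)}$ is asserted, not proved. The one inequality you derive controls a single linear combination of $\ip{\vb_i}{\vb_j}$ and $\ip{\dual{\vb_i}}{\dual{\vb_j}}$, not the two off-diagonal Gram entries separately, so ``essentially block-diagonal up to controlled error'' does not follow from it. Moreover, your dyadic partition is by the magnitude of $\Vert\vb_i\Vert\,\Vert\dual{\vb_i}\Vert$, and the claim that there are only $O(\ln n)$ blocks presupposes a quasi-polynomial bound on the very quantity you are trying to estimate; as written the argument is circular. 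Your own concession that the naive use of the local conditions yields only $2^{O(n)}$ is, in effect, an admission that the theorem has not been proved: the entire content of the statement is the improvement from exponential to quasi-polynomial. I am not aware that a direct analysis of a locally or globally $S$-minimal basis along these lines is known to deliver the $n^{O(\ln n)}$ bound.

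For contrast, Seysen's actual argument is constructive and recursive rather than variational: one splits $\Lambda$ into a primitive sublattice of rank roughly $n/2$ and the projected quotient lattice of complementary rank, recursively builds simultaneously reduced bases of the two pieces, then lifts and size-reduces; each of the $O(\ln n)$ recursion levels costs a factor polynomial in $n$, which is precisely where $n^{O(\ln n)}$ comes from (see also \cite{Maze:2010dm} for explicit constants). If you want a complete proof, that divide-and-conquer skeleton is the route to take; the globally minimal basis then inherits the bound for free, and \eqref{eq:seysen} indeed follows from $\Vert\vb_i\Vert^2\Vert\dual{\vb_i}\Vert^2\leq S(B)$ as you note.
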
 
For an explicit bound we refer to \cite{Maze:2010dm} and for more
information on lattice reduction and Geometry of Numbers  we refer to
\cite{Gruber:1987vp}, \cite{Cassels:1971p7266}. For sake of 
comprehensibility we split the proof of Theorem \ref{thm:discretejohnimproved}
into two parts, one covering the inclusion bound and one the
cardinality bound.

\begin{proof}[Proof of i) of Theorem \ref{thm:discretejohnimproved}]
 In view of John's theorem \eqref{eq:inclusion_john} we may apply a linear
 transformation $T$ to $K$ such that with $\tilde K= T\,K$  
 \begin{equation}
   \label{eq:4_6}
             \ban\subseteq \tilde K\subseteq\sqrt{n}\ban. 
  \end{equation}
  With $\Lambda=T\Z^n$  the
  problem is now to find a {\gap} $\gapP(A,\vb)$ in $\Lambda$ such that
  $\gapP(A,\vb)\subset \tilde K$ and 
  \begin{equation*}
    \tilde K\cap\Lambda\subset\gapP(A,n^{O(\ln n)}\vb).
  \end{equation*}
  Let $B=(\vb_1,\dots,\vb_n)$ be a Seysen reduced basis of $\Lambda$ 
  with associated basis
  $B^{-\trans}=(\dual{\vb_1},\dots,\dual{\vb_n})$ of the polar lattice and let $\vu\in\R^n$ be given by
  $u_i=(1/n)\Vert \vb_i\Vert^{-1}$, $1\leq i\leq n$.

  First, for $\vx\in \gapP_\R(B,\vu)$ we have
  $\vx=\sum_{i=1}^n\lambda_i\vb_i$ with $|\lambda_i|\leq u_i$ and by the
  triangle inequality we conclude $\Vert\vx\Vert\leq 1$. Hence, with
  \eqref{eq:4_6} we certainly have  $\gapP(B,\vu)\subset \tilde K$.
  On the other hand, given $\vx=\sum_{i=1}^n \beta_i\vb_i\in \tilde K$ we
  get by Cramer's rule and \eqref{eq:4_6}
  \begin{equation*}
    |\beta_i|=\frac{|\det(\vx,\vb_1,\dots,\vb_{i-1},\vb_{i+1},\vb_n)|}{|\det
      B|} \leq \sqrt{n}\frac{\vol_{n-1}(\vb_1,\dots,\vb_{i-1},\vb_{i+1},\vb_n)}{\vol(\vb_1,\dots,\vb_n)},
  \end{equation*}
  where $\vol_k(\vc_1,\dots,\vc_k)$ denotes the $k$-dimensional volume
  of the parallelepiped $\{\sum_{i=1}^k \mu_i\vc_i: 0\leq \mu_i\leq
  1\}$. By \eqref{eq:4_1} we find that
  \begin{equation*}
    \begin{split} 
      \vol(\vb_1,\dots,\vb_n)&=\vol_{n-1}(\vb_1,\dots,\vb_{i-1},\vb_{i+1},\vb_n)
      \frac{ \ip{\dual{\vb_i}}{\vb_i}}{\Vert \dual{\vb_i}\Vert}\\
      &= \vol_{n-1}(\vb_1,\dots,\vb_{i-1},\vb_{i+1},\vb_n)
      \frac{1}{\Vert \dual{\vb_i}\Vert},
    \end{split} 
  \end{equation*}
  and thus for $1\leq i\leq n$
  \begin{equation}
    |\beta_i|\leq \sqrt{n}\Vert \dual{\vb_i}\Vert.
\label{eq:bound1}    
         \end{equation}
 Together with  the definition of $u_i$ and Seysen's bound
 \eqref{eq:seysen} we conclude $|\beta_i|\leq n^{3/2}n^{O(\ln n)}u_i$,
 $1\leq i\leq n$. Hence,
 \begin{equation*}
           \tilde K\cap\Lambda\subseteq \gapP_\R(B, n^{O(\ln
             n)}\vu)\cap\Lambda = \gapP(B, n^{O(\ln
             n)}\vu),
 \end{equation*}
 since $B$ is a basis of $\Lambda$.
\end{proof} 

\begin{remark} The optimal upper bound in  Theorem
 \ref{thm:seysen} for a Seysen reduced basis is not known, but any
 improvement on this bound would immediately yield an improvement of 
\eqref{eq:inclusion}.
\end{remark}

For the cardinality bound \eqref{eq:cardinality} of Theorem
\ref{thm:discretejohnimproved} we need another tool from Geometry
  of Numbers, namely Minkowksi's successive minima $\lambda_i(K,\Lambda)$,
    which for $K\in\Ksn$, $\Lambda\in\Lat^n$ and  $1\leq i\leq n$ are defined by
  \begin{equation*}
    \lambda_i(K,\Lambda)=\min\{\lambda>0 : \dim(\lambda K\cap\Lambda)\geq i\}. 
  \end{equation*}
In words, $\lambda_i(K,\Lambda)$ is the smallest dilation factor $\Lambda$
such that $\lambda\, K$ contains $i$ linearly independent lattice
points of $\Lambda$. Minkowski's fundamental second theorem on successive minima
states that \cite[\S 9, Theorem 1]{Gruber:1987vp}) 
\begin{equation}
    \vol(K)\leq \det\Lambda \prod_{i=1}^n \frac{2}{\lambda_i(K,\Lambda)},
  \label{eq:min_volume}
\end{equation}
and here we 
need a discrete version of it. In \cite{Henk:2002vta} it was shown that
\begin{equation}
  \label{eq:discreteminkowski}
  |K\cap\Lambda|\leq 2^{n-1}\prod_{i=1}^n \left\lfloor\frac{2}{\lambda_i(K,\Lambda)}+1\right\rfloor,
\end{equation}
and for an improvement on the constant $2^{n-1}$ and
related results we refer to \cite{Malikiosis:2010tb,
  Malikiosis:2010vi}. It is conjectured in \cite{Betke:1993jn} that
\eqref{eq:discreteminkowski} holds without any additional factor in front of the product which
would, in particular, imply Minkowski's volume bound.

\begin{proof}[Proof of ii) of Theorem \ref{thm:discretejohnimproved}]
  Let $\va_i\in\Z^n$, $1\leq i\leq n$, be  linearly independent
  lattice vectors corresponding to the successive minima
  $\lambda_i=\lambda_i(K,\Z^n)$, i.e., $\va_i\in\lambda_i\,K$, $1\leq i\leq
  n$. Since $\lambda_i^{-1}\va_i\in K$ it follows
  \begin{equation*}
    \left\{\sum_{i=1}^n\mu_i\frac{1}{n\lambda_i}\va_i :
      -1\leq\mu_i\leq 1\right\}\subset \conv\{\pm\lambda_i^{-1}\va_i
    :1\leq i\leq n\}\subseteq K.
  \end{equation*}
  Thus, denoting by $A$ the matrix with columns $\va_i$ and letting
  $\vb$ be the vector with entries $b_i=(n\lambda_i)^{-1}$ we have
  $\gapP(A,\vb)\subset K$ and
  \begin{equation*}
     |P(A,\vb)|=\prod_{i=1}^n \left(2\left\lfloor\frac{1}{n\lambda_i}\right\rfloor +1\right).
   \end{equation*}
   Now it is not hard to see that
   \begin{equation}
     2\left\lfloor\frac{1}{n\lambda_i}\right\rfloor +1 \geq
     \frac{1}{3}\frac{1}{n}\left\lfloor\frac{2}{\lambda_i}+1\right\rfloor
\label{eq:second} 
   \end{equation}
   and with \eqref{eq:discreteminkowski} we get
   \begin{equation*}
     |P(A,\vb)| \geq
     \left(\frac{1}{3n}\right)^n\left(\frac{1}{2}\right)^{n-1}2^{n-1}\prod_{i=1}^n
     \left\lfloor\frac{2}{\lambda_i}+1\right\rfloor> (6n)^{-n}|K\cap\Z^n|.
   \end{equation*}
   This shows \eqref{eq:cardinality}.
\end{proof}

\begin{remark} 
  We want to point out that the columns of the matrix $A$ of the {\gap} in the above proof of
    the cardinality bound  of Theorem \ref{thm:discretejohnimproved}
    do not build a basis of $\Z^n$ (in general) 
    and hence, this {\gap} cannot be used in order to obtain an inclusion bound. 
\end{remark}

Now the proof of Theorem \ref{thm:onegap} is a kind of combination of
the two proofs leading to \eqref{eq:inclusion} and
\eqref{eq:cardinality}. Instead of a Seysen reduced basis we exploit
properties of a so called Hermite-Korkine-Zolotarev (HKZ) reduced basis
$\vb_1,\dots,\vb_n$  of the lattice $\Lambda$. For such a basis it was shown by Mahler (see, e.g., \cite[Theorem 2.1]{Lagarias:1990p675})
that for $1\leq i\leq n$ 
\begin{equation}  
 \Vert\vb_i\Vert\leq \frac{\sqrt{{i+3}}}{2}\lambda_i(B_n,\Lambda). 
\label{eq:hkzbasis} 
\end{equation} 
Moreover, 
H{\r a}stad\&Lagarias \cite{Hastad:1990tk} pointed out that for such a
HKZ-basis one has  
\begin{equation} 
  \Vert b_i\Vert\cdot\Vert\dual{b}_i\Vert\leq
  \left(\frac{3}{2}\right)^n<n^{\frac{1}{2}n/\ln n}.
\label{eq:hkzpolar} 
\end{equation}   
This bound is  worse than the one given  in \eqref{eq:seysen}, but
the advantage of a HKZ reduced basis is its close relation to the 
successive minima \eqref{eq:hkzbasis}.

\begin{proof}[Proof of Theorem \ref{thm:onegap}] First we may assume
  that $\lambda_n(K,\Z^n)\leq 1$, i.e., that $K$ contains $n$ linearly
  independent lattice points. Otherwise, all lattice points of $K$
  lying in a hyperplane $H$ and it would be sufficient to prove the
  theorem with respect to the $n-1$-dimensional convex body $K\cap H$
  and lattice $H\cap\Z^n$.  

  Now we proceed completely
  analogously to the proof
  of i) in Theorem \ref{thm:discretejohnimproved}; we just replace the Seysen reduced basis by
  a HKZ-reduced basis $B=(\vb_1,\dots,\vb_n)$,
  and the {\gap} is given by $\gapP(B,\vu)$ with
  $u_i=(1/n)\Vert\vb_i\Vert^{-1}$, $1\leq  i\leq  n$.  Replacing
  \eqref{eq:seysen} by \eqref{eq:hkzpolar} in \eqref{eq:bound1} leads
  then to
  \begin{equation*}
  \gapP(B,\vu)  \subseteq \tilde K\cap\Lambda\subseteq  \gapP(B, n^{O(n/\ln
             n)}\vu),
  \end{equation*} 
  where $\tilde K$ was a linear image of $K$ such that
\begin{equation}
   \label{eq:4_6_2}
             \ban\subseteq \tilde K\subseteq\sqrt{n}\ban. 
\end{equation}
It remains to prove the cardinality bound for the {\gap} $\gapP(B,\vu)$
and $\tilde K$. Regarding the size of $\gapP(B,\vu)$ we have
\begin{equation}
  |\gapP(B,\vu)| =\prod_{i=1}^n \left(2\left\lfloor
      \frac{1}{n\Vert\vb_i\Vert}\right\rfloor+1\right)\geq
  n^{-n}\prod_{i=1}^n \frac{1}{\Vert\vb_i\Vert}.
  \label{eq:bound2} 
\end{equation} 
On the other hand, for an upper bound on $\tilde K\cap\Lambda$ we use
\eqref{eq:discreteminkowski} and since $\lambda_n(K,\Lambda)\leq 1$ we
get
\begin{equation*}
  |K\cap\Lambda|\leq 2^{n-1} \prod_{i=1}^n
  \left(\frac{2}{\lambda_i(K,\Lambda)}+1\right) \leq 6^n
  \prod_{i=1}^n\frac{1}{\lambda_i(K,\Lambda)}.
\end{equation*}
In view of \eqref{eq:4_6_2} and \eqref{eq:hkzbasis} we obtain
\begin{equation*}
  |K\cap\Lambda|\leq 6^n
  \prod_{i=1}^n\frac{1}{\lambda_i(\sqrt{n}\ban,\Lambda)} = (6\sqrt{n})^n
  \prod_{i=1}^n\frac{1}{\lambda_i(\ban,\Lambda)}\leq (6n)^n \prod_{i=1}^n\frac{1}{\Vert\vb_i\Vert}.
\end{equation*}   
Combined with \eqref{eq:bound2} we get  $|K\cap\Lambda|\leq O(n)^{2n}  |\gapP(B,\vu)|$.

\end{proof}

Finally, we consider unconditional bodies $K\in\Ksn$, i.e., bodies which
are symmetric to all coordinate hyperplanes. As stated  in Proposition
\ref{prop:unconditional}, in this special case the inclusion bound can
be made linear in the dimension. In view of Proposition \ref{prop:lowerbounds} this is
also the optimal order within this class of bodies as the given
example used for the lower bound in Proposition \ref{prop:lowerbounds} is
unconditional.

\begin{proof}[Proof of Proposition \ref{prop:unconditional}] For $i=1,\dots,n$ let
  $u_i$ be the maximal entry of the $i$th coordinate of a point of
  $K$. Then $u_i>0$ and
  \begin{equation}
    K\cap\Z^n\subseteq \gapP(I_n,\vu)
 \label{eq:bound3}   
  \end{equation}
with $\vu=(u_1,\dots,u_n)^\trans$ and $I_n$ the $n\times n$-identity matrix. By the unconditionality of $K$ we
have $\pm u_i\,\ve_i\in K$, $1\leq i\leq n$, and thus
\begin{equation*}
  \gapP_\R(I_n,n^{-1}\vu)\subset\conv\{\pm u_i\ve_i: 1\leq i\leq
  n\}\subseteq K.
\end{equation*}
Hence, $\gapP(I_n,n^{-1}\vu)\subset K$. For the remaining cardinality
bound we observe that $(2\,u_i+1)< (2\lfloor u_i/n\rfloor)+1)\,3n$ and
so \eqref{eq:bound3} implies
\begin{equation*}
  \begin{split}
  |K\cap\Z^n|& \leq \prod_{i=1}^n (2\,\lfloor u_i\rfloor +1)< (3n)^n
  \prod_{i=1}^n (2\lfloor u_i/n\rfloor)+1)\\  & =(3n)^n
  |\gapP(I_n,n^{-1}\vu)|.
\end{split}  
\end{equation*}

\end{proof}


\end{document}